\newtheorem{theorem}{Theorem}
\newtheorem{corollary}{Corollary}
\newtheorem{lemma}{Lemma}
\newtheorem{proposition}{Proposition}
\newtheorem{definition}{Definition}
\theoremstyle{definition}
\newtheorem{remark}{Remark}
\newtheorem{example}{Example}
\newcommand{\cC}{\mathcal{C}}
\newcommand{\cH}{\mathcal{H}}
\newcommand{\cL}{\mathcal{L}}
\newcommand{\cT}{\mathcal{T}}
\newcommand{\C}{{\mathbb{C}}}
\newcommand{\R}{{\mathbb{R}}}
\newcommand{\Z}{{\mathbb{Z}}}
\newcommand{\Aut}{\mathrm{ Aut}}          % Automorphisms
\newcommand{\Hom}{\mathrm{ Hom}}          % Homomorphisms
\newcommand{\p}{{\partial}}
\begin{document}
\title[Local systems and finiteness of the Hofer-Zehnder capacity]{Local systems on the free loop space and finiteness of the Hofer-Zehnder capacity}
\author{Peter Albers}
\address{Mathematisches Institut, Westf\"alische Wilhelms-Universit\"at
M\"unster, \ Einsteinstrasse 62, D-48149 M\"unster, Germany}
\email{peter.albers@uni-muenster.de}
\thanks{P.A. partially funded by SFB 878}
\author{Urs Frauenfelder}
\address{Universit\"at Augsburg, Universit\"atsstrasse 14, D-86159 Augsburg, Germany}
\email{urs.frauenfelder@math.uni-augsburg.de}
\author{Alexandru Oancea}
\address{Sorbonne Universit\'es, UPMC Univ Paris 06, UMR 7586, Institut de Math\'ematiques de Jussieu-Paris Rive Gauche, Case 247, 4 place Jussieu, F-75005, Paris, France}
\email{alexandru.oancea@imj-prg.fr}
\thanks{A.O. partially funded by the European Research Council, StG-259118-STEIN}

%\date{June 18, 2014}

\date{\today}

%\begin{abstract}
%\end{abstract}
\maketitle

\setlength{\epigraphwidth}{5.5cm}

\epigraph{{\it ``Local coefficients bring an extra level of complication that one tries to avoid
whenever possible."}

\flushright{--- Allen Hatcher, {\it Algebraic Topology}}}

%%%%%%%%%%%%%%%%%%%%%%%%%%%%%%%%%%%%%%%%%%%
%%%%%%%%%%%%%%%%%%%%%%%%%%%%%%%%%%%%%%%%%%%
%%%%%%%%%%%%%%%% Abstract %%%%%%%%%%%%%%%%%%%%
%%%%%%%%%%%%%%%%%%%%%%%%%%%%%%%%%%%%%%%%%%%
%%%%%%%%%%%%%%%%%%%%%%%%%%%%%%%%%%%%%%%%%%%

\begin{abstract}
In this article we examine under which conditions symplectic homology with local coefficients of a unit disk bundle $D^*M$ vanishes. For instance this is the case if the Hurewicz map $\pi_2(M)\to H_2(M;\Z)$ is nonzero. As an application we prove finiteness of the $\pi_1$-sensitive Hofer-Zehnder capacity of unit disk bundles in these cases. We also prove uniruledness for such cotangent bundles. Moreover, we find an obstruction to the existence of $H$-space structures on general topological spaces, formulated in terms of local systems. 
\end{abstract}
%{\footnotesize{\tableofcontents}}
%\tableofcontents

%%%%%%%%%%%%%%%%%%%%%%%%%%%%%%%%%%%%%%%%%%%
%%%%%%%%%%%%%%%%%%%%%%%%%%%%%%%%%%%%%%%%%%%
%%%%%%%%%%%%%%%%%%%%%%%%%%%%%%%%%%%%%%%%%%%
%%%%%%%%%%%%%%%%% Introduction %%%%%%%%%%%%%%%%%%%%
%%%%%%%%%%%%%%%%%%%%%%%%%%%%%%%%%%%%%%%%%%%
%%%%%%%%%%%%%%%%%%%%%%%%%%%%%%%%%%%%%%%%%%%

\section*{Introduction}

Symplectic geometry has its origins in Hamiltonian dynamics, and arguably one of the most important classes of symplectic manifolds is that of cotangent bundles. A manifold $M$ represents the configuration space and the cotangent bundle $T^*M$ represents the phase space of a physical system.  The Hofer-Zehnder capacity was introduced in \cite{Hofer_Zehnder_A_new_capacity_for_symplectic_manifolds,Hofer_Zehnder_Book}. Concerning the history and the fundamental importance of symplectic capacities for our understanding of symplectic geometry we refer to the article~\cite{CHLS}, which includes in particular a comprehensive list of references. As a matter of fact there is a whole lattice of Hofer-Zehnder capacities indexed by the lattice of collections of free homotopy classes of loops, and the largest of these capacities is the so-called \emph{$\pi_1$-sensitive Hofer-Zehnder capacity}, which takes into account only contractible periodic orbits. We refer to the smallest of these capacities, which takes into account all free homotopy classes of loops, as the \emph{Hofer-Zehnder capacity}. A striking consequence of the finiteness of the ($\pi_1$-sensitive) Hofer-Zehnder capacity is an almost existence theorem for (contractible) periodic orbits close to a regular energy level, see~\S\ref{sec:HZ}, Theorem~\ref{thm:almost_existence}. Unfortunately, to the authors' knowledge it is unknown whether the Hofer-Zehnder capacities are finite for all unit disk bundles $D^*M$ over closed manifolds. The $\pi_1$-sensitive Hofer-Zehnder capacity need actually not be finite for every unit disk bundle: it is infinite if the closed manifold $M$ admits a Riemannian metric without contractible closed geodesics, for instance for the torus $M=T^n$ or hyperbolic manifolds, see also the discussion in~\S\ref{sec:HZ}.

Recent progress was made by Kei Irie~\cite{Irie-HZ}, who proved that the Hofer-Zehnder capacity is finite provided the manifold $M$ carries an $S^1$-action with non-contractible orbits, which covers of course the case of the torus $T^n$. Note that finiteness of the Hofer-Zehnder capacity of $D^*T^n$ follows alternatively from the finiteness of the Hofer-Zehnder capacity of standard symplectic balls and the fact that $T^n$ admits a Lagrangian embedding in $\R^{2n}$, using the Weinstein neighborhood theorem. The same argument works for any closed manifold $M$ that admits a Lagrangian embedding in $\R^{2n}$, see~\cite[\S1.4]{Irie-HZ} and the references therein.

Our main result is a proof of the finiteness of the $\pi_1$-sensitive Hofer-Zehnder capacity for a class of closed manifolds that is essentially disjoint from the above. We say that a closed connected manifold $M$ satisfies \emph{condition~(C)} if one of the following two conditions holds with $G=\pi_1(M)$:
\renewcommand{\theenumi}{\roman{enumi}}
\begin{enumerate}
\item There exists a prime number $\ell\ge 2$ such that the map $H^2(G;\Z/\ell)\to H^2(M;\Z/\ell)$ is not surjective.
\item There exists a prime number $\ell\ge 2$ such that the map $H^3(G;\Z/\ell)\to H^3(M;\Z/\ell)$ is not injective. 
\end{enumerate}
The maps on cohomology are induced by the classifying map $M\to BG$ for the $G$-principal bundle given by the universal cover $\tilde M\to M$.

\noindent {\bf Remark.}  As shown in \S\ref{sec:loc-sys-loop-space}, Proposition~\ref{prop:pi2}, condition (i) is equivalent to:
{\it 
\begin{enumerate}
\item[(i')] The Hurewicz map $\pi_2(M)\to H_2(M;\Z)$ is nonzero.
\end{enumerate}
}

\noindent {\bf Theorem.} {\it Let $M$ satisfy condition (C). Then the $\pi_1$-sensitive Hofer-Zehnder capacity of $D^*M$ is finite.
}

Condition~(i') holds for example if $M$ is a simply-connected closed manifold with nonzero integral second homology, for example the sphere $S^2$. This is of particular interest with respect to the planar restricted $3$-body problem~\cite{AFKP}. In this example, the bound on the Hofer-Zehnder capacity is given by the regularized period of the doubly-covered retrograde periodic orbit. Note that simply-connected manifolds are not covered by Irie's theorem~\cite{Irie-HZ}, and also do not admit Lagrangian embeddings in $\R^{2n}$ as proved by Gromov. A relevant class of closed manifolds for which the theorem does \emph{not} apply is that of $K(\pi,1)$'s, and this is related to the fact that the $\pi_1$-sensitive Hofer-Zehnder capacity can be infinite in such cases, as shown by the examples of tori and hyperbolic manifolds.  

Regarding manifolds which are not simply connected, one easy way to prove finiteness of the $\pi_1$-sensitive Hofer-Zehnder capacity is to pass to a finite cover. Indeed, if the unit disk bundle of a finite cover has finite capacity, so does the unit disk bundle of the manifold itself. One relevant example is that of $\R P^2$. In this way, we obtain in a straightforward way finiteness of the $\pi_1$-sensitive Hofer-Zehnder capacity of closed manifolds with finite fundamental group and non-trivial second homotopy group. However, our theorem also applies to a large class of manifolds with infinite fundamental group, for example  complex blow-ups of any closed manifold of even dimension $\ge 4$. One such example is that of the blown-up torus $T^4$. Note that in this case the second homotopy group is not even finitely generated. 

Following in the footsteps of Biolley~\cite{Biolley-thesis,Biolley-arxiv}, we also prove in~\S\ref{sec:uniruledness} that the cotangent bundle $T^*M$ is uniruled if condition (C) holds. As an amusing byproduct we obtain in~\S\ref{sec:products}, Proposition~\ref{prop:RPn} an obstruction to the existence of $H$-space structures.

Note that we do not assume the manifold $M$ to be orientable, though we could do so to obtain finiteness of the Hofer-Zehnder capacity by passing to the orientation double cover. We are able to analyze the non-orientable case thanks to the work of Abouzaid~\cite{Abouzaid-cotangent}, which explains the isomorphism between symplectic homology and the homology of the free loop space also for non-orientable manifolds. See also the discussion below. 

The key ingredient in the proof is the fact that, under condition (C), there exists a local system of coefficients $\cL$ with fiber $\C$ on the free loop space $LD^*M$, which restricts to a trivial local system on the space of constant loops, and such that symplectic homology of $D^*M$ with coefficients in $\cL$ vanishes. On the other hand, Irie proved~\cite[Corollary~3.5]{Irie-HZ} that the vanishing of the symplectic homology with constant $\Z/2$-coefficients implies finiteness of the $\pi_1$-sensitive Hofer-Zehnder capacity for any Liouville domain. His result adapts to the above setup, and leads to a proof of our main theorem. Uniruledness is proved using the same vanishing result and a symplectic field theory neck-stretching argument. 

The use of local coefficients in order to force the vanishing of symplectic homology first appeared in the work of Ritter~\cite{Ritter09}, see also~\cite{Ritter-ALE,Ritter}. In the case of cotangent bundles, vanishing is obtained in~\cite{Ritter09} for a Novikov-type $\Z$-local system under a finite type assumption. Our use of local coefficients in order to force the vanishing of symplectic homology is inspired by a remark in an unpublished note of Seidel, who proved that the homology of the free loop space of $\mathbb CP^2$ with coefficients in a $\Z/2$-local system that is nontrivial on the based loop space must vanish~\cite{Seidel-unpublished}. Seidel's proof uses the Pontryagin ring structure on the homology of the based loop space, and we use the same idea in a more general setting. 

That local systems of coefficients on the free loop space should -- and actually do -- play a role in symplectic topology became clear after Ritter's work and also Kragh's observation that Viterbo's isomorphism~\cite{Viterbo-cotangent,AS,SW,Abouzaid-cotangent} between the symplectic homology of the cotangent bundle of a closed manifold and the homology of its free loop space holds with coefficients more general than $\Z/2$ only after twisting one of the homology groups by a particular transgressive local system determined by the second Stiefel-Whitney class, see~\cite{Abouzaid-cotangent,AS-corrigendum}. 
Twisted string topology operations have previously appeared in~\cite{Ritter}.
Local systems with fiber $\C$ and holonomy in $U(1)$ also play a key role in mirror symmetry. 

The deep reason why our method currently works only for cotangent bundles rather than for general Weinstein domains is that this is the only instance in which we have a topological interpretation for symplectic homology, as the homology of the free loop space. But there should be much more general instances in which one should be able to find such ``killer" local systems, see the discussion in~\S\ref{sec:open_questions}. 

In this paper we consider rank $1$ local systems with fiber $\C$ and holonomy in $\Z/\ell$ for some prime number $\ell$. Here $\Z/\ell$ is identified with the multiplicative subgroup $\{1,\zeta,\zeta^2,\dots,\zeta^{\ell-1}\}\subset \C^*$ determined by a primitive $\ell$-th root of unity $\zeta$.

\noindent {\bf Convention}. For any topological space $X$ we denote $H_\cdot(X)$ and $H^\cdot(X)$ homology, respectively cohomology with arbitrary and unspecified constant coefficients.

\noindent {\bf Acknowledgements}. The third author wishes to thank Nancy Hingston for inspiring comments, and the College of New Jersey for hospitality during the summer 2015.

%%%%%%%%%%%%%%%%%%%%%%%%%%%%%%%%%%%%%%%%%%%%
%%%%%%%%%%%%%%%%%%%%%%%%%%%%%%%%%%%%%%%%%%%%
%%%%%%%%%%%%%%%%%%%%%%%%%%%%%%%%%%%%%%%%%%%%
%%%%%%%%%%%%%  Section 1  %%%%%%%%%%%%%%%%%%%%%%%%%
%%%%%%%%%%%%%%%%%%%%%%%%%%%%%%%%%%%%%%%%%%%%
%%%%%%%%%%%%%%%%%%%%%%%%%%%%%%%%%%%%%%%%%%%%

\section{Local systems on loop spaces and Pontryagin product} \label{sec:local_systems}

Given a topological space $X$ we denote by $ct$ any constant path in $X$ and, given two paths $\alpha,\beta$ in $X$ we denote by $\alpha\cdot \beta$ their concatenation, defined by first following $\alpha$ and then following $\beta$, with double speed. 

\subsection{Preliminaries on local systems} 

\subsubsection{Classification}

\begin{definition} \label{defi:local-system}
Let $X$ be a path-connected topological space 
which admits a universal cover 
and let $R$ be a commutative ring with unit. A \emph{local system of coefficients on $X$} consists of the following data: 
\begin{itemize}
\item a collection $M_x$ of $R$-modules, one for each point $x\in X$, called \emph{the fibers} of the local system;
\item a collection of isomorphisms of $R$-modules $\tau_{[\alpha]}:M_x\to M_y$, one for each pair $(x,y)\in X\times X$ and each homotopy class $[\alpha]$ of paths from $x$ to $y$ with fixed endpoints, subject to the conditions 
$$
\tau_{[ct]}=\operatorname{Id}
$$ 
and 
$$
\tau_{[\alpha]\cdot[\beta]} = \tau_{[\beta]}\tau_{[\alpha]}.
$$
\end{itemize} 
We say that a local system has \emph{rank $k$} if all its fibers are free $R$-modules of rank $k$.  
\end{definition}

The above data provides in particular for any $x\in X$ a group homomorphism 
$$
\rho'_x\in\Hom(\pi_1(X;x),\Aut(M_x)),
$$
which we call \emph{holonomy representation at $x$}. The holonomy representations at two distinct points $x,y\in X$ are related as follows. For any fixed endpoint homotopy class $[\alpha]$ of paths from $x$ to $y$ there are natural isomorphisms $\phi_{[\alpha]}:\pi_1(X;x)\to \pi_1(X;y)$, $[\gamma]\mapsto [\alpha^{-1}]\cdot[\gamma]\cdot[\alpha]$, and $\Phi_{[\alpha]}:\Aut(M_x)\to \Aut(M_y)$, $\psi\mapsto \tau_{[\alpha]}\psi\tau_{[\alpha^{-1}]}$. Then $\rho'_x=\Phi_{[\alpha]}^{-1}\rho'_y\phi_{[\alpha]}$. 

Let $M$ be an $R$-module which is isomorphic to any of the fibers $M_x$ of a local system on $X$. For any choice of a basepoint $x\in X$ and for any choice of isomorphism $M_x\simeq M$, we obtain an element $\tilde \rho_x\in\Hom(\pi_1(X;x),\Aut(M))$. This element is well-defined up to inner automorphisms in the target and we denote its equivalence class by 
$$
\rho_x\in \Hom(\pi_1(X;x),\Aut(M))/\Aut(M).
$$ 
For any choice of homotopy class $[\alpha]$ with fixed endpoints connecting $x$ and $y$ we have $\rho_x=\rho_y\phi_{[\alpha]}$. In order to emphasize the fact that this relation holds for any class $[\alpha]$, we shall suppress from now on the mention of the basepoint $x$ and write 
$$
\rho \in\Hom(\pi_1(X),G)/G,\qquad G=\Aut(M).
$$
We call $\rho$ \emph{the holonomy representation}, and we speak of a \emph{local system of coefficients with fiber $M$}. The holonomy representation determines uniquely the local system up to isomorphism~\cite[Theorem~1]{Steenrod}. 

Any representative $\rho'_{x_0}$ of the holonomy representation determines the bundle $\tilde X\times_{\pi_1(X;x_0)}ÊM$, where $\pi_1(X;x_0)$ acts on $M$ via $\rho'_{x_0}$ and on the universal cover $\tilde X$ via deck transformations. If we endow $M$ with the discrete topology, this is a covering space of $X$ with a distinguished section $0$, the fibers have natural $R$-module structures, and the path lifting isomorphisms $M_x\to M_y$ corresponding to paths $\alpha$ in $X$ running from $x$ to $y$ coincide with the isomorphisms $\tau_{[\alpha]}$ in Definition~\ref{defi:local-system}.

Let now $G$ be any group and assume we are given a faithful representation of $G$ into the automorphisms of some $R$-module $M$, i.e. an injective group homomorphism $G\to \Aut(M)$. We identify $G$ with the orbit of its image under conjugation inside $\Aut(M)$. A \emph{$G$-local system of coefficients with fiber $M$} is a local system with fiber $M$ whose holonomy has image contained in $G$. Equivalently, we require that the holonomy is an element 
$$
\rho\in\Hom(\pi_1(X),G)/G,
$$
where $G$ acts on itself by conjugation. The holonomy representation determines uniquely the $G$-local system up to isomorphism. 

The isomorphism class of a local system is sometimes also called \emph{gauge equivalence class}. The space $\Hom(\pi_1(X),G)/G$ is sometimes called \emph{representation variety of the group $G$}. The correspondence between isomorphism classes of $G$-local systems of coefficients and elements of the representation variety of $G$ is reminiscent of the construction of vector bundles associated to principal bundles through linear representations.

\begin{remark}[On terminology] In the case $R=\Z$ Steenrod~\cite{Steenrod} refers to a local system as being a \emph{bundle of abelian groups}. One also encounters in the literature the notion of a \emph{bundle of groups}, meaning the analogous structure for which the fibers $M_x$ are not $R$-modules, but simply groups. The collection of fundamental groups $\{\pi_1(X;x)\, : \, x\in X\}$ is the prototypical example of bundle of groups. Labourie~\cite[Definition~3.4.7]{Labourie} uses the term \emph{$G$-local system} to  designate a $G$-principal bundle whose transition functions are locally constant, a notion that is very close to that of a bundle of groups with fiber $G$.
\end{remark}

Let now $R=\R,\C$ and $L$ be a finite dimensional vector space over $R$. In this case we can reinterpret local systems of coefficients with fiber modeled on $L$ as vector bundles endowed with flat connections. More precisely, there is a one-to-one bijective correspondence between isomorphism classes of local systems with fiber modeled on $L$ and gauge equivalence classes of pairs consisting of a vector bundle of rank $\dim\, L$ and of a flat connection on this bundle, where two such pairs are gauge equivalent if the vector bundles are isomorphic and if, seen through this isomorphism, the two connections are gauge equivalent. 
Given a local system with holonomy representation $\rho'_{x_0}:\pi_1(X;x_0)\to \Aut(L)$, one associates to it the vector bundle $\tilde X\times_{\pi_1(X;x_0)} L$ with flat connection induced by the trivial connection on $\tilde X\times L$. Then the parallel transport isomorphisms along paths $\alpha$ in $X$ correspond to the isomorphisms $\tau_{[\alpha]}$ in Definition~\ref{defi:local-system}.

Similarly, given a faithful linear representation of a Lie group $G$ into $\mathrm{GL}(L)$, there is a notion of $G$-bundle with flat connection of rank $\dim\, L$~\cite[Definitions~3.2.1 and~3.3.18]{Labourie}. There is a one-to-one bijective correspondence between gauge equivalence classes of $G$-bundles with flat connection and the elements of the representation variety $\Hom(\pi_1(X),G)/G$~\cite[Proposition~3.3.22]{Labourie}. See also~\cite[\S3.5]{Labourie} for further details on the correspondence between points on the representation variety, gauge equivalence classes of $G$-bundles with flat connection, and isomorphism classes of $G$-local systems.

There are natural operations of pull-back, tensor product, and direct sum involving $G$-local systems of coefficients. These can be understood in each of the previous models, and in particular from the bundle point of view, in which case these operations are the usual ones on bundles. 

Let $G$ be an abelian group. In this case the representation variety is simply $\Hom(\pi_1(X),G)$ and has the structure of an abelian group induced by that of $G$. The Hurewicz isomorphism and the universal coefficient theorem imply that
$$
\operatorname{Hom}(\pi_1(X),G)\simeq \operatorname{Hom}(H_1(M;\mathbb{Z}),G)\simeq H^1(M;G).
$$

If $G=\Z/2$ we can represent it as $\{\pm 1\}\subset O(1)$, and we recover the familiar fact that isomorphism classes of real line bundles are in one-to-one bijective correspondence with elements of $H^1(M;\Z/2)$. The information on the connection is redundant here since the structure group of any real line bundle can be reduced to $O(1)$, and any $O(1)$-connection is automatically flat.

Similarly, if $G=\Z/k$ for some $k\ge 2$, we can represent it inside $U(1)$ as the group of $k$-th roots of unity. We recover the fact that isomorphism classes of complex line bundles whose structure group can be reduced to $\Z/k$ are in one-to-one bijective correspondence with elements of $H^1(M;\Z/k)$. Again, the information on the connection is redundant since any connection with holonomy in a discrete group is necessarily flat.

\subsubsection{Homology and cohomology with local coefficients}

Given a $G$-local system of coefficients $\cL=\{M_x\, : \, x\in X\}$ with fiber $M$ on a path-connected topological space $X$ which has a universal cover $\tilde X$, one defines \emph{twisted homology and cohomology groups} $H_\cdot(X;\cL)$ and $H^\cdot(X;\cL)$ as follows~\cite[\S3.H]{Hatcher}. We choose a representative $\rho\in\Hom(\pi_1(X),G)$ for the holonomy, which exhibits $M$ as a left $\Z[\pi_1(X)]$-module. On the other hand $C_\cdot(\tilde X)$ has a left $\Z[\pi_1(X)]$-module structure induced by deck transformations, and also a right $\Z[\pi_1(X)]$-module structure via $ag=g^{-1}a$. We define \emph{homology groups of $X$ with local coefficients in $\cL$} as 
$$
H_\cdot(X;\cL)=H_\cdot(C_\cdot(\tilde X)\otimes _{\Z[\pi_1(X)]} M). 
$$
Similarly, we define \emph{cohomology groups of $X$ with local coefficients in $\cL$} as 
$$
H^\cdot(X;\cL)=H^\cdot(\Hom_{\Z[\pi_1(X)]}(C_\cdot(\tilde X),M)).
$$

An alternative definition is the following, see~\cite[\S3.H]{Hatcher} and also~\cite[\S5.9]{McCleary}. The chain complex $C_\cdot(\tilde X)\otimes _{\Z[\pi_1(X)]}M$ can be identified with the chain complex $C_\cdot(X;\cL)$ whose elements are formal linear combinations $\sum n_i\sigma_i$, where $\sigma_i$ is a singular simplex in $X$ and $n_i$ is a horizontal 
lift of $\sigma_i$ to $\cL$. We also write such a formal linear combination as $\sum n_i\otimes \sigma_i$ in order to emphasize the relation to the previous point of view. The boundary operator is obtained by restricting each $n_i$ to the faces of the standard simplex. Similarly, the cochain complex $\Hom_{\Z[\pi_1(X)]}(C_\cdot(\tilde X),M)$ can be identified with the cochain complex $C^\cdot(X;\cL)$, whose elements are functions which associate to each singular simplex in $X$ a lift of that simplex to $\cL$.

Given a map $f:X\to Y$ and a local system $\cL$ on $Y$, we have natural maps 
\begin{equation}\label{eq:functoriality}
f_*:H_\cdot(X;f^*\cL)\to H_\cdot(Y;\cL)
\end{equation}
and
$$
f^*:H^\cdot(Y;\cL)\to H^\cdot(X;f^*\cL). 
$$

\subsubsection{Products} \label{sec:products}

Cup products and cap products in twisted homology or cohomology are only defined provided one works in the category of algebras over a ring $R$, instead of the category of $R$-modules as above. Given an $R$-algebra $K$ and a faithful representation of a group $G$ into the group of $R$-algebra automorphisms of $K$, we have an obvious notion of $G$-local system with fiber $K$. The discussion in the previous paragraph holds verbatim in order to show that such local systems are again classified by 
$$
\Hom(\pi_1(X),G)/G. 
$$
Note that the only $R$-algebra automorphism of $R$ is the identity, so that there are no nontrivial local systems of $R$-algebras with fiber $R$. We refer to~\cite[\S3.H]{Hatcher} for a discussion of cup- and cap products, as well as of Poincar\'e duality.

In this paper we shall use the Pontryagin product on twisted homology of an $H$-space, with the based loop space in mind. The striking difference with respect to the cup-product is that the Pontryagin product is defined for rank $1$ local systems of $R$-modules, i.e. local systems for which the model fiber is a ring $R$ and whose holonomy consists of $R$-module automorphisms of $R$, as we shall now explain.

Let $R$ be a commutative ring and denote $R^\times$ the group of invertible elements in $R$. Any $R$-module automorphism of $R$ is given by multiplication with an element of $R^\times$. Indeed, if $\varphi\in\Aut_{R-\mathrm{mod}}(R)$ then $\varphi(1)\in R^\times$ and $\varphi(x)=\varphi(1)x$. Also, given a representation $\rho:\pi_1(X)\to \Aut_{\Z-\mathrm{mod}}(R)$, the condition that $\rho$ takes values in $\Aut_{R-\mathrm{mod}}(R)$ is equivalent to the identity
$$
\rho(g_1)(x_1) \rho(g_2)(x_2)=\rho(g_1g_2)(x_1x_2),\qquad \forall g_1, g_2, x_1, x_2.
$$
The examples that we shall be mainly using are the following: if $R=\Z$ then $R^\times=\{\pm 1\}$, if $R=\R$ then $R^\times=\R^*$, and if $R=\C$ then $R^\times=\C^*$. The group $\Z/2$ is isomorphic to $\Z^\times$ and can be realized as a subgroup of $\R^\times$ and $\C^\times$. Any finite cyclic group $\Z/k$, $k\ge 2$ can be realized as a subgroup of $\C^\times$ by viewing it as the subgroup $\{1,\zeta,\zeta^2,\dots,\zeta^{k-1}\}$ generated by a primitive $k$-th root of unity. 

For the next proposition, recall that an \emph{$H$-space} is a topological space endowed with a continuous map $m:X\times X\to X$ called \emph{multiplication}, and with an element $e\in X$, called \emph{homotopy unit}, such that $m(e,\cdot):X\to X$ and $m(\cdot,e):X\to X$ are homotopic to the identity through maps that preserve the basepoint $e$, so that in particular $m(e,e)=e$. We also require the multiplication to be associative up to homotopy.

\begin{proposition} \label{prop:Hspace} Let $R$ be a ring and $X$ be a path-connected $H$-space. For any rank one $G$-local system of $R$-modules $\cL$ on $X$, the multiplication $m:X\times X\to X$ induces a unital ring structure on $H_\cdot(X;\cL)$. The unit is represented by the class of a point. 
\end{proposition}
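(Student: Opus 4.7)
The plan is to mimic the classical construction of the Pontryagin product with local coefficients. The crucial new input is an isomorphism of local systems
\[
m^*\cL \cong \cL \boxtimes \cL := \pi_1^*\cL \otimes_R \pi_2^*\cL
\]
on $X\times X$. To produce it, I fix a homotopy class of a path from $e$ to itself (the constant one), which identifies the holonomy of $\cL$ with a representative homomorphism $\rho:\pi_1(X,e)\to R^\times$; here we use the earlier remark that for rank one $R$-module local systems the holonomy takes values in $R^\times$, and that $R^\times$ is abelian. The holonomy of $\cL\boxtimes\cL$ at $(e,e)$ is the map $(g_1,g_2)\mapsto \rho(g_1)\rho(g_2)$ acting on $R\otimes_R R\cong R$. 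On the other hand, since the restrictions $m|_{X\times\{e\}}$ and $m|_{\{e\}\times X}$ are basepoint-preserving homotopic to the identity, the standard Eckmann--Hilton argument yields $m_*(g_1,g_2)=g_1g_2$ on $\pi_1(X\times X)=\pi_1(X)\times\pi_1(X)$, and in particular $\pi_1(X)$ is abelian. Thus the holonomy of $m^*\cL$ is $(g_1,g_2)\mapsto \rho(g_1g_2)=\rho(g_1)\rho(g_2)$, and by Steenrod's classification theorem the two local systems are canonically isomorphic via an isomorphism $\phi$ normalized to be the identity on the fiber over $(e,e)$.

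With $\phi$ in hand, the Pontryagin product is the composition
\[
H_i(X;\cL)\otimes_R H_j(X;\cL)\xrightarrow{\ \times\ } H_{i+j}(X\times X;\cL\boxtimes\cL)\xrightarrow{\ \phi_*\ } H_{i+j}(X\times X;m^*\cL)\xrightarrow{\ m_*\ } H_{i+j}(X;\cL),
\]
where the first arrow is the exterior cross product for local coefficients (which is available precisely because the fibers carry an $R$-bilinear multiplication $R\times R\to R$) and the last arrow is functoriality~\eqref{eq:functoriality}. The cross product is well-defined and bilinear by a standard Eilenberg--Zilber argument on $C_\cdot(\tilde X)\otimes_{\Z[\pi_1(X)]}R$, once one observes that the product of two equivariant lifts is equivariant for the diagonal $\pi_1(X)\times \pi_1(X)$-action because $\rho$ is a homomorphism into the \emph{commutative} group $R^\times$.

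Unitality follows by restricting $\phi$ along the two inclusions $\iota_L,\iota_R:X\hookrightarrow X\times X$ as $X\times\{e\}$ and $\{e\}\times X$. The class $[e]\in H_0(X;\cL)$ is the image of $1\in \cL_e\cong R$ under the map induced by $\{e\}\hookrightarrow X$. The $H$-space identities $m\circ\iota_L\simeq \id\simeq m\circ\iota_R$ rel $e$, together with the normalization $\phi_{(e,e)}=\id$, imply that $m_*\phi_*([e]\times \alpha)=\alpha= m_*\phi_*(\alpha\times[e])$ for every $\alpha\in H_\cdot(X;\cL)$. Associativity is proved analogously, using the homotopy associativity of $m$ together with the associativity of the cross product and the compatibility of $\phi$ under iteration (both $(\phi\boxtimes\id)\circ\phi$ and $(\id\boxtimes\phi)\circ\phi$ produce the canonical isomorphism from $\cL\boxtimes\cL\boxtimes\cL$ to $m_3^*\cL$, where $m_3$ is either iterated multiplication). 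The main obstacle is Step 1, which is where the joint structure of $R$ as both a ring and the coefficient module, the commutativity of $R^\times$, and the $H$-space axioms on $X$ are all used simultaneously to canonically identify $m^*\cL$ with $\cL\boxtimes\cL$; once this is settled, the remaining verifications follow the classical untwisted pattern.
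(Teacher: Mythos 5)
Your proposal is correct and follows essentially the same route as the paper: the heart of both arguments is the identification $m^*\cL\cong pr_1^*\cL\otimes_R pr_2^*\cL$ obtained by comparing holonomies at $(e,e)$ using that $m_*$ on $\pi_1$ is loop concatenation (your Eckmann--Hilton step is exactly the paper's observation that $m\circ(\gamma_1,\gamma_2)\simeq\gamma_1\cdot\gamma_2\simeq\gamma_2\cdot\gamma_1$), after which the product is the twisted cross product (Eilenberg--Zilber/shuffle map) followed by $m_*$, with unitality and associativity coming from the $H$-space homotopies. The only cosmetic difference is that the paper writes out the shuffle map with local coefficients explicitly where you cite the standard Eilenberg--Zilber argument.
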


The ring structure is defined at chain level as the composition
$$
\xymatrix
@C=30pt
{
C_\cdot(X;\cL)\otimes_R C_\cdot(X;\cL)\ar[r]^-B & C_\cdot(X\times X;pr_1^*\cL\otimes_R pr_2^*\cL)\ar[r]^-{m_*} & C_\cdot(X;\cL).
}
$$
Here $B$ is the Eilenberg-MacLane shuffle map with local coefficients and $m_*$ is induced by the multiplication, as explained below.

That $B$ acts as indicated is a consequence of its definition. Indeed, given spaces $X$ and $Y$ with local systems of $R$-modules $\cL_X$ and $\cL_Y$, the \emph{Eilenberg-MacLane shuffle map}  
$$
B:C_\cdot(X;\cL_X)\otimes_R C_\cdot(Y;\cL_Y)\to C_\cdot(X\times Y;pr_1^*\cL_X\otimes_R pr_2^*\cL_Y)
$$
is defined as follows (see~\cite[p.~64]{EMacL53b} and~\cite[p.~268]{Greenberg-Harper} for the untwisted version). Given $\sigma:\Delta^p\to X$ and $\tau:\Delta^q\to Y$, and given lifts $\alpha:\Delta^p\to \cL_X$ and $\beta:\Delta^q\to \cL_Y$ of $\sigma$ and $\tau$ to $\cL_X$, respectively $\cL_Y$, we define 
$$
B(\alpha\otimes\sigma\bigotimes\beta\otimes\tau)=\sum \pm(\alpha\circ D_I\otimes\beta\circ D_J)\otimes (\sigma\circ D_I,\tau\circ D_J),
$$
where the sum ranges over $(p,q)$-shuffles $(I,J)$, the sign is given by the signature of the corresponding permutation, $D_I$ and $D_J$ are the corresponding projections $\Delta^{p+q}\to\Delta^p$ and $\Delta^{p+q}\to\Delta^q$. One checks readily that $\alpha\circ D_I\otimes \beta\circ D_J$ is a horizontal lift of $(\sigma\circ D_I,\tau\circ D_J)$ to $pr_1^*\cL_X\otimes pr_2^*\cL_Y$.

That $m_*$ acts as indicated is a consequence of~\eqref{eq:functoriality} and of the following Lemma. 

\begin{lemma} Let $X$ be a path-connected $H$-space with multiplication $m:X\times X\to X$ and homotopy unit $e$. For any rank one local system of $R$-modules $\cL$ on $X$, we have a canonical isomorphism  
$$
pr_1^*\cL\otimes_R pr_2^*\cL\cong m^*\cL
$$
which coincides with the canonical isomorphism $R\otimes_R R\cong R$ on the fiber at $(e,e)$. 
\end{lemma}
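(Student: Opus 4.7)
The plan is to invoke the classification of rank one $R$-module local systems on a path-connected space by their holonomy representations, and then check that $pr_1^*\cL\otimes_R pr_2^*\cL$ and $m^*\cL$ give the same element of $\Hom(\pi_1(X\times X,(e,e)),R^\times)$, together with a canonical identification of their fibers over $(e,e)$. Here $R^\times=\Aut_{R\text{-mod}}(R)$ is abelian, so there is no conjugation ambiguity to worry about.

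First, identify $\pi_1(X\times X,(e,e))\cong \pi_1(X,e)\times\pi_1(X,e)$ via the two projections, and write $\rho:\pi_1(X,e)\to R^\times$ for the holonomy of $\cL$. By naturality of pull-back, together with the fact that the tensor product of rank one $R$-module local systems corresponds to the product of their holonomy characters, the holonomy of $pr_1^*\cL\otimes_R pr_2^*\cL$ at $(g_1,g_2)$ is multiplication by $\rho(g_1)\rho(g_2)$, while the holonomy of $m^*\cL$ at $(g_1,g_2)$ is multiplication by $\rho(m_*(g_1,g_2))$, where $m_*$ denotes the induced map on fundamental groups.

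The crux is the identity $m_*(g_1,g_2)=g_1\cdot g_2$ in $\pi_1(X,e)$. Writing $(g_1,g_2)=(g_1,e)\cdot(e,g_2)$ in $\pi_1(X\times X,(e,e))$ and using that $m_*$ is a group homomorphism, it suffices to prove $m_*(g_1,e)=g_1$ and $m_*(e,g_2)=g_2$. Both follow at once from the $H$-space axioms, which say that $m(\cdot,e)$ and $m(e,\cdot)$ are homotopic to $\id_X$ through basepoint-preserving maps. Consequently $\rho(m_*(g_1,g_2))=\rho(g_1)\rho(g_2)$, so the two holonomy characters agree.

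To pin down the canonical isomorphism, prescribe at the fiber over $(e,e)$ the canonical $R$-module map $R\otimes_R R\to R$, $1\otimes 1\mapsto 1$, and extend to an arbitrary fiber over $(x_1,x_2)$ by parallel transport along any path in $X\times X$ from $(e,e)$ to $(x_1,x_2)$. Independence of the chosen path reduces to the $\pi_1(X\times X,(e,e))$-equivariance of the prescribed isomorphism at $(e,e)$, which is automatic since by the previous paragraph both holonomy representations are multiplication by the same scalar and the canonical map $R\otimes_R R\to R$ commutes with scalar multiplication. The expected main obstacle is the Eckmann--Hilton-type identity $m_*(g_1,g_2)=g_1\cdot g_2$: one must combine the basepoint-preserving homotopies $m(\cdot,e)\simeq \id_X$ and $m(e,\cdot)\simeq \id_X$ into an explicit homotopy from the loop $t\mapsto m(g_1(t),g_2(t))$ to the concatenation $g_1\cdot g_2$. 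Everything else is routine naturality of pull-back, tensor product, and parallel transport.
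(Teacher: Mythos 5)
Your proof is correct and follows essentially the same route as the paper: both arguments reduce the statement to comparing holonomy representations on $\pi_1(X\times X,(e,e))$ and use the $H$-space unit axioms to identify $m_*(g_1,g_2)$ with the product $g_1\cdot g_2$ (the paper phrases this as $m\circ\gamma$ being homotopic to the concatenation $\gamma_1\cdot\gamma_2$), after which the canonical identification $R\otimes_R R\cong R$ at $(e,e)$ yields the isomorphism. Your explicit parallel-transport construction of the isomorphism is just a spelled-out version of the classification statement the paper invokes, so there is no substantive difference.
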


\begin{proof} Denote $\varphi:R\otimes_R R\stackrel\simeq\longrightarrow R$, $a\otimes b\mapsto ab$ the canonical isomorphism between the model fibers at $(e,e)$. Then $\varphi^{-1}$ acts by $a\mapsto a\otimes 1=1\otimes a$ and we have an induced group isomorphism 
$$
I_\varphi:\Aut_{R-\mathrm{mod}}(R\otimes_R R)\to \Aut_{R-\mathrm{mod}}(R), \qquad f\mapsto \varphi f \varphi^{-1}.
$$
We need to show that the following diagram commutes
$$
\xymatrix
@C=60pt
{\pi_1(X\times X;(e,e)) \ar[r]^-{\rho^{m^*\cL}}Ê\ar[dr]_-{\rho^{pr_1^*\cL\otimes pr_2^*\cL}\quad }Ê& \Aut_{R-\mathrm{mod}}(R) \\
& \Aut_{R-\mathrm{mod}}(R\otimes_R R) \ar[u]_-{I_\varphi},
}
$$
where $\rho^{m^*\cL}$ and $\rho^{pr_1^*\cL\otimes pr_2^*\cL}$ are the holonomy representations for $m^*\cL$ and $pr_1^*\cL\otimes_R pr_2^*\cL$ respectively. 

Let $\gamma=(\gamma_1,\gamma_2)$ be a loop in $X\times X$ based at $(e,e)$. It is a general fact that $m\circ \gamma$ is homotopic to the concatenation $\gamma_1\cdot\gamma_2$, as well as to the concatenation $\gamma_2\cdot \gamma_1$ (this is the general reason why $H$-spaces have abelian fundamental group). Thus 
$$
\rho^{m^*\cL}(\gamma)=\rho^\cL(m\circ\gamma)=\rho^\cL(\gamma_2\cdot\gamma_1)=\rho^\cL(\gamma_1)\circ \rho^\cL(\gamma_2),
$$
so that
$$
\rho^{m^*\cL}(\gamma)a=\rho^\cL(\gamma_1)(1) \rho^\cL(\gamma_2)(1) a,\qquad a\in R.
$$
On the other hand 
\begin{eqnarray*}
I_\varphi\rho^{pr_1^*\cL\otimes pr_2^*\cL}(\gamma) & = & \varphi \rho^{pr_1^*\cL\otimes pr_2^*\cL}(\gamma)\varphi^{-1}Ê\\
& = & \varphi (\rho^{pr_1^*\cL}(\gamma)\otimes\mathrm{Id})\circ (\mathrm{Id}\otimes\rho^{pr_2^*\cL}(\gamma))\varphi^{-1}Ê\\
& = & \varphi (\rho^\cL(\gamma_1)\otimes\mathrm{Id})\circ (\mathrm{Id}\otimes\rho^\cL(\gamma_2))\varphi^{-1}
\end{eqnarray*}
acts by 
\begin{equation*}
\begin{split}
a\mapsto 1\otimes a \mapsto 1\otimes \rho^\cL(\gamma_2)(a) & \mapsto \rho^\cL(\gamma_1)(1)\otimes  \rho^\cL(\gamma_2)(a) \\
& \mapsto \rho^\cL(\gamma_1)(1)  \rho^\cL(\gamma_2)(a) = \rho^\cL(\gamma_1)(1) \rho^\cL(\gamma_2)(1) a
\end{split}
\end{equation*}
for any $a\in R$. 
\end{proof}

\begin{proof}[Proof of Proposition~\ref{prop:Hspace}]
Associativity follows from the associativity up to homotopy of the multiplication $m$, and from the associativity of the multiplication in the ring $R$. The property of being a homotopy unit implies that $1\otimes e$ is a cycle whose homology class is a unit for the previous ring structure. Since any point in a path-connected $H$-space is a homotopy unit, it follows that the same class can also be represented as $1\otimes x$ for any point $x\in X$. 
\end{proof}

The following is a particular case of Proposition~\ref{prop:Hspace}.

\begin{proposition} \label{prop:Pontryagin}
Let $M$ be a path-connected space 
with basepoint and denote $\Omega_0M$ the component of the constant loop inside the based loop space. For any ring $R$ and any rank one $G$-local system of $R$-modules $\cL$ on $\Omega_0M$, the concatenation of loops induces a unital ring structure on $H_\cdot(\Omega_0M;\cL)$. The unit is represented by the class of the constant loop.
\end{proposition}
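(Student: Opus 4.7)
The plan is to deduce this directly from Proposition~\ref{prop:Hspace} by verifying that $\Omega_0 M$, equipped with concatenation of loops, satisfies the $H$-space axioms spelled out just before that proposition. Concretely, I would take the multiplication to be the concatenation map $m(\alpha,\beta)=\alpha\cdot\beta$ and the homotopy unit to be the constant loop $ct$ at the basepoint of $M$.

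First I would note that the concatenation $\alpha\cdot\beta$ of two loops based at the basepoint is again such a loop, and if both $\alpha$ and $\beta$ lie in $\Omega_0 M$ then so does $\alpha\cdot\beta$: indeed, any path of loops from $\alpha$ to $ct$ can be concatenated pointwise with any path of loops from $\beta$ to $ct$, and $ct\cdot ct=ct$, so $\alpha\cdot\beta$ is homotopic through based loops to $ct$. Hence $m$ restricts to a continuous map $\Omega_0 M\times \Omega_0 M\to\Omega_0 M$.

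Next I would check the homotopy unit axiom: the standard linear reparametrization homotopy shows that $m(ct,\cdot)=ct\cdot\gamma$ is homotopic to $\gamma$ rel basepoint, and similarly for $m(\cdot,ct)=\gamma\cdot ct$; in particular these homotopies preserve the component $\Omega_0 M$ and fix $ct$ throughout, which is exactly the condition required before Proposition~\ref{prop:Hspace}. The standard Moore reparametrization argument gives a based homotopy $(\alpha\cdot\beta)\cdot\gamma\simeq \alpha\cdot(\beta\cdot\gamma)$ inside $\Omega_0 M$, so $m$ is homotopy associative.

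Having verified that $(\Omega_0 M,m,ct)$ is a path-connected $H$-space, I would apply Proposition~\ref{prop:Hspace} to the rank one $G$-local system $\cL$ of $R$-modules on $\Omega_0 M$. This immediately yields a unital ring structure on $H_\cdot(\Omega_0 M;\cL)$, whose unit is represented by $1\otimes ct$, i.e.\ the class of the constant loop. There is no real obstacle here since all the work is done in Proposition~\ref{prop:Hspace}; the only point worth emphasizing is that the homotopies witnessing the $H$-space axioms for $\Omega_0 M$ stay inside the component of the constant loop, so that applying Proposition~\ref{prop:Hspace} on $\Omega_0 M$ (rather than on all of $\Omega M$) is legitimate.
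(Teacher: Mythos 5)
Your proposal is correct and follows essentially the same route as the paper: the paper's proof is precisely that $\Omega_0 M$ is a path-connected $H$-space under concatenation with the constant loop as homotopy unit, so Proposition~\ref{prop:Hspace} applies. You simply spell out the standard verifications (closure of the component under concatenation, unit and associativity homotopies) in more detail than the paper does.
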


\begin{proof}%[Proof of Proposition~\ref{prop:Pontryagin}]
This follows from Proposition~\ref{prop:Hspace}, since $\Omega_0M$ is an $H$-space with multiplication given by the concatenation of loops. The homotopy unit is the constant loop at the basepoint. 
\end{proof}

\noindent {\sc Applications.}

\emph{(i) Vanishing of twisted homology of the based loop space.}

\begin{proposition} \label{prop:vanishing_H-space}
Let $X$ be a path-connected $H$-space. Let $R$ be a ring and $G\subset R^\times$ a subgroup such that 
$$
1-g\in R^\times
$$
for any $g\in G$ with $g\neq 1$. Then any nontrivial $G$-local system $\cL$ of rank one $R$-modules on $X$ has the property that $H_\cdot(X;\cL)=0$. 

This holds in particular for any nontrivial rank one local system if $R$ is a field.
\end{proposition}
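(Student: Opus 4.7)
The plan is to exploit the unital ring structure on $H_\cdot(X;\cL)$ provided by Proposition~\ref{prop:Hspace} and show that, under the hypotheses, the unit itself vanishes, which forces the whole ring to be zero.

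First I would fix a basepoint $e\in X$ and use Proposition~\ref{prop:Hspace} to endow $H_\cdot(X;\cL)$ with the ring structure coming from the $H$-space multiplication $m$, whose unit is the class $u:=[1\otimes e]\in H_0(X;\cL)$ represented by the constant $0$-simplex at $e$ with lift $1\in R\cong \cL_e$. I would also use that $H_\cdot(X;\cL)$ is naturally an $R$-module, the scalar action being induced at chain level by multiplication on the coefficient $R$-module in each singular simplex $(a\otimes\sigma)\mapsto(ra\otimes\sigma)$.

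Next, since the local system is nontrivial, the holonomy representation $\rho\in\Hom(\pi_1(X;e),G)/G$ admits a representative taking some loop $\gamma$ based at $e$ to an element $g\in G$ with $g\neq 1$. Viewing $\gamma$ as a singular $1$-simplex and taking the horizontal lift starting at $1\in\cL_e$, the boundary in $C_\cdot(X;\cL)$ computes to
\begin{equation*}
\partial\bigl(1\otimes \gamma\bigr)=g\otimes e \;-\; 1\otimes e,
\end{equation*}
because the parallel transport of $1$ along $\gamma$ lands on $g\in \cL_e\cong R$. Passing to homology yields $[g\otimes e]=[1\otimes e]=u$ in $H_0(X;\cL)$. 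On the other hand the $R$-module structure gives $[g\otimes e]=g\cdot u$, so
\begin{equation*}
(1-g)\,u = 0 \quad \text{in } H_0(X;\cL).
\end{equation*}
By hypothesis $1-g\in R^\times$, hence multiplying by $(1-g)^{-1}$ yields $u=0$. Since $u$ is the multiplicative unit of the ring $H_\cdot(X;\cL)$, every class $\xi=u\cdot\xi$ is also zero, so $H_\cdot(X;\cL)=0$. For the last assertion, when $R$ is a field any $g\in G\subset R^\times$ with $g\neq 1$ satisfies $1-g\neq 0$ and is therefore automatically invertible, so the general statement applies.

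The main conceptual point — and the only place where some care is needed — is the identification $[g\otimes e]=g\cdot u$ together with the boundary computation above: one must check that the chain-level $R$-action commutes with the boundary operator on $C_\cdot(X;\cL)$, which is immediate from the formula $\partial(a\otimes\sigma)=\sum(\pm)\,\tau_{\sigma|_{[v_0,v_i]}}(a)\otimes\sigma\circ d_i$ and from the fact that parallel transport is $R$-linear in the rank one case. The rest of the argument is purely formal and does not require any further input beyond the nontriviality of $\cL$ and the hypothesis $1-g\in R^\times$.
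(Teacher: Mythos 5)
Your proposal is correct and follows essentially the same route as the paper: use the unital ring structure from Proposition~\ref{prop:Hspace}, pick a loop at $e$ with nontrivial holonomy $g\neq 1$, and show via the boundary of the lifted $1$-simplex that the unit $[1\otimes e]$ dies, hence all of $H_\cdot(X;\cL)$ vanishes. The only cosmetic difference is that the paper exhibits the unit directly as $\p\bigl((1-g^{-1})^{-1}\tilde x\otimes\gamma\bigr)$, while you derive $(1-g)u=0$ and invert $1-g$; these are the same computation.
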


\begin{proof} Since $H_\cdot(X;\cL)$ is a ring with unit, it is enough to show that the unit $[1\otimes e]\in H_0(X;\cL)$ vanishes, where $e\in X$ is the homotopy unit. To prove this, we identify the fiber of $\cL$ at $e$ with $R$ and choose a loop $\gamma:[0,1]\to X$, $\gamma(0)=\gamma(1)=e$, along which the holonomy is nontrivial, and as such given by multiplication with some $g\in G$, $g\neq 1$. Let $\tilde x:[0,1]\to\cL$ be the unique lift of $\gamma$ such that $\tilde x(1)=1$. Then 
$$
\p(\tilde x\otimes \gamma)=(1-g^{-1})\otimes e.
$$ 
By assumption $1-g^{-1}\in R^\times$ and thus 
$$
\p \left( (1-g^{-1})^{-1}\tilde x\otimes \gamma\right) = 1\otimes e.
$$
\end{proof}

\begin{corollary} \label{cor:vanishing_based_loops}
Let $M$ be a path-connected space
with basepoint and let $\Omega_0M$ be the component of the constant loops in the based loop space. Let $R$ be a ring and $G\subset R^\times$ a subgroup such that 
$$
1-g\in R^\times
$$
for any $g\in G$ with $g\neq 1$. Then any nontrivial $G$-local system $\cL$ of rank one $R$-modules on $\Omega_0M$ has the property that $H_\cdot(\Omega_0M;\cL)=0$. 

This holds in particular for any nontrivial rank one local system if $R$ is a field.
\end{corollary}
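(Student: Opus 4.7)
The plan is to derive this corollary as an immediate consequence of Proposition~\ref{prop:vanishing_H-space}. Since Proposition~\ref{prop:Pontryagin} already establishes that $\Omega_0M$ fits into the framework of Proposition~\ref{prop:Hspace}, the same structural input lets us invoke Proposition~\ref{prop:vanishing_H-space} verbatim, with no further work required beyond recording the identifications.

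More precisely, first I would recall that the based loop space component $\Omega_0M$ carries the structure of a path-connected $H$-space: the multiplication is given by concatenation of based loops $(\alpha,\beta)\mapsto \alpha\cdot\beta$, which is associative up to homotopy via the standard reparametrisation, and the constant loop $ct$ at the basepoint serves as homotopy unit, since $ct\cdot\alpha$ and $\alpha\cdot ct$ are homotopic to $\alpha$ through based loops. This is exactly the hypothesis required in Proposition~\ref{prop:vanishing_H-space}.

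Next, with $X=\Omega_0M$, the hypotheses on $R$ and $G$ carry over without modification: by assumption $G\subset R^\times$ is a subgroup such that $1-g\in R^\times$ for every $g\in G$ with $g\neq 1$, and $\cL$ is a nontrivial rank one $G$-local system of $R$-modules on $X$. Applying Proposition~\ref{prop:vanishing_H-space} then yields $H_\cdot(\Omega_0M;\cL)=0$, which is exactly the desired conclusion.

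Finally, for the special case of a field $R$, I would simply note that every element $g\in R^\times$ with $g\neq 1$ automatically satisfies $1-g\in R^\times$ (since $R^\times=R\setminus\{0\}$ in a field), so the hypothesis on $G$ is vacuous and the statement applies to an arbitrary nontrivial rank one local system. There is no genuine obstacle to overcome here: the corollary is purely an instantiation of the $H$-space statement, and the only content is the identification of $\Omega_0M$ as an $H$-space with homotopy unit given by the constant loop.
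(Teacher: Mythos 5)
Your proposal is correct and matches the paper's own proof, which likewise deduces the corollary from Proposition~\ref{prop:vanishing_H-space} by observing that $\Omega_0M$ is a path-connected $H$-space under concatenation with the constant loop as homotopy unit. The additional remarks about the field case are consistent with the statement and require no further justification.
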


\begin{proof} This follows from the previous Proposition using the fact that $\Omega_0M$ is a path-connected $H$-space.
\end{proof}

In the next section we find necessary and sufficient conditions on a manifold $M$ for such nontrivial local systems to exist.

\emph{(ii) $H$-spaces.}

Our next result is a homological obstruction to the existence of $H$-space structures. 
It is obtained by negating Proposition~\ref{prop:vanishing_H-space}.

\begin{proposition} \label{prop:RPn}
Let $X$ be a path-connected topological space. Assume that there exists a ring $R$, a subgroup $G\subset R^\times$ such that $1-g\in R^\times$ for any $g\in G$, $g\neq 1$, and a $G$-local system $\cL$ of rank one $R$-modules which is nontrivial and such that $H_\cdot(X;\cL)\neq 0$. Then $X$ does not carry the structure of an $H$-space. 
\hfill{$\square$}
\end{proposition}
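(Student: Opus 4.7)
The plan is simply to argue by contraposition from Proposition~\ref{prop:vanishing_H-space}, since the hypotheses of Proposition~\ref{prop:RPn} have been tailored to match exactly the setup there. The statement asserts that if a space $X$ admits a \emph{witness to non-vanishing} of twisted homology, i.e.\ a ring $R$, a subgroup $G\subset R^\times$ with $1-g\in R^\times$ for all $g\neq 1$, and a nontrivial rank one $G$-local system $\cL$ of $R$-modules with $H_\cdot(X;\cL)\neq 0$, then $X$ cannot be an $H$-space.

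First I would suppose for contradiction that $X$ does carry the structure of an $H$-space, with multiplication $m:X\times X\to X$ and homotopy unit $e\in X$. Since $X$ is assumed path-connected, the hypotheses of Proposition~\ref{prop:vanishing_H-space} are then met verbatim: the ring $R$ and subgroup $G\subset R^\times$ satisfy $1-g\in R^\times$ for $g\neq 1$, and $\cL$ is a nontrivial $G$-local system of rank one $R$-modules on the $H$-space $X$. That proposition therefore applies and yields $H_\cdot(X;\cL)=0$.

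This directly contradicts the standing assumption $H_\cdot(X;\cL)\neq 0$, so $X$ cannot be an $H$-space, completing the proof.

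There is no genuine obstacle here, as all of the substantive content lies in Proposition~\ref{prop:vanishing_H-space} (itself obtained by exhibiting an explicit primitive for $1\otimes e$ via the unit for the Pontryagin product coming from Proposition~\ref{prop:Hspace}). The only thing to verify is that the hypotheses match on the nose, which they do, so Proposition~\ref{prop:RPn} is indeed the formal contrapositive and needs nothing more.
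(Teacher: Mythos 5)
Your proposal is correct and coincides with the paper's own argument: the paper states Proposition~\ref{prop:RPn} precisely as the contrapositive (``obtained by negating'') of Proposition~\ref{prop:vanishing_H-space}, which is exactly what you do. Nothing further is needed.
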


\begin{example}[Even dimensional real projective spaces $\R P^{2k}$, $k\ge 1$ do not carry $H$-space structures] We consider on $\R P^n$ the unique nontrivial rank one local system $\cL$ with fiber $\R$ and holonomy $-\mathrm{Id}$ along the generator of $\pi_1(\R P^n)$. To compute $H_\cdot(\R P^n;\cL)$ we use a cellular decomposition on $\R P^n$ with one cell in each dimension between $0$ and $n$. The resulting chain complex is
$$
\xymatrix
@C=40pt
{
\R & \ar[l]_{\times 2} \R & \ar[l]_0 \R & \ar[l]_{\times 2} \dots & \ar[l] \R,
}
$$
where the last copy of $\R$ sits in degree $n$. This complex is acyclic if $n$ is odd, and its homology has rank one and is supported in degree $n$ if $n$ is even. In view of Proposition~\ref{prop:RPn}, we conclude that even dimensional real projective spaces are not $H$-spaces. The classical proof of this result uses a structure theorem for Hopf algebras~\cite{Hopf_Uber_die_Topologie_der_Gruppenmanigfaltigkeiten} and \cite[Section 3.C]{Hatcher}.

The obstruction vanishes for odd-dimensional real projective spaces. As a matter of fact these do carry sometimes $H$-space structures: $\R P^1$ is diffeomorphic to $SO(2)$, and $\R P^3$ is diffeomorphic to $SO(3)$. Finally, $\R P^7$ carries the structure of an $H$-space since the octonions give $S^7$ the structure of an $H$-space and $\pm1$ belong to its center. Thus $\R P^7$ inherits its $H$-space structure from $S^7$, see \cite[Example 3.C.3]{Hatcher}. That $\R P^{2k+1}$, $k\ge 4$ are not $H$-spaces is a consequence of a classical theorem of Adams, which states that the only spheres that are $H$-spaces are $S^0$, $S^1$, $S^3$, and $S^7$. Indeed, should $\R P^{2k+1}$, $k\ge 4$ have an $H$-space structure, the same would be true for its universal cover $S^{2k+1}$.  

In a similar vein, note that $\R P^\infty=K(\Z/2,1)$ has an $H$-space structure induced by the ring structure of $\Z/2$, and indeed $H_\cdot(\R P^\infty;\cL)=0$ for the nontrivial local system described above. 
\end{example}

\subsubsection{Leray-Serre spectral sequence for homology with local coefficients}

The following is a variation on the classical theme of the Leray-Serre spectral sequence. 

\begin{proposition} \label{prop:LS} Let $F\stackrel i \hookrightarrow E\stackrel \pi\longrightarrow B$ be a fibration whose base is a CW-complex, and let $\cL$ be a local system on $E$. There is a spectral sequence $(E^r_{p,q},d^r)$, $r\ge 2$ converging to $H_\cdot(E;\cL)$ and with second page 
$$
E^2_{p,q}\simeq H_p(B;\cH_q(F;i^*\cL)),\qquad p,q\ge 0.
$$
Here $\cH_\cdot(F;i^*\cL)$ denotes the local system on $B$ defined by the homology of the fiber. 
\end{proposition}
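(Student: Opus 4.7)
The plan is to use the filtration of $E$ by preimages of skeleta of $B$ and identify the associated spectral sequence. Let $B^{(p)}$ denote the $p$-skeleton of $B$, and set $W_p := \pi^{-1}(B^{(p)})$, so that
$$\emptyset = W_{-1} \subset W_0 \subset W_1 \subset \dots \subset E, \qquad E = \bigcup_p W_p.$$
I then define an increasing, exhaustive filtration $F_p C_\cdot(E;\cL) := \im\bigl(C_\cdot(W_p;\cL) \to C_\cdot(E;\cL)\bigr)$ on the singular chain complex with coefficients in $\cL$. The associated spectral sequence of this filtered complex has $E^1$ page
$$E^1_{p,q} = H_{p+q}(W_p, W_{p-1}; \cL),$$
and it converges to $H_\cdot(E;\cL)$ since the filtration is bounded below and exhaustive.

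To compute the $E^1$ page I would proceed cell by cell. For each characteristic map $\phi_\alpha \colon (D^p, \partial D^p) \to (B^{(p)}, B^{(p-1)})$ of a $p$-cell, the pullback of $\pi$ along $\phi_\alpha$ is fiber-homotopy equivalent to the trivial fibration $D^p \times F_\alpha \to D^p$, because $D^p$ is contractible. Pulling back $\cL$ along such a trivialization gives a local system on $D^p \times F_\alpha$ that, again by contractibility of $D^p$, is canonically isomorphic to the pullback $\pr_{F_\alpha}^*(\cL|_{F_\alpha})$ of its restriction to the fiber. Excision across the $(p-1)$-skeleton together with a relative Künneth argument (note that the $(D^p,\partial D^p)$ factor carries only constant coefficients) gives
$$H_{p+q}(D^p \times F_\alpha,\, \partial D^p \times F_\alpha;\, \cL) \cong H_q(F_\alpha; i^*\cL).$$
Summing over the $p$-cells I would obtain a canonical identification
$$E^1_{p,q} \cong C_p^{\mathrm{cell}}(B;\, \cH_q(F; i^*\cL)),$$
where $\cH_q(F; i^*\cL)$ is the local system on $B$ whose fiber over $b$ is $H_q(F_b; i^*\cL|_{F_b})$ and whose parallel transport along a path $\gamma$ in $B$ is induced by the fiber transport of $\pi$ along $\gamma$ combined with the transport provided by $\cL$.

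The final step is to verify that the $E^1$-differential coincides with the cellular boundary with coefficients in $\cH_q(F; i^*\cL)$. By naturality of the connecting homomorphism of the pair $(W_p, W_{p-1})$ this reduces, cell by cell, to the classical identification of the attaching map of a single $p$-cell with a twisted degree computation: each $(p-1)$-face contributes after being weighted by the monodromy of $\cH_q(F; i^*\cL)$ along the image of the attaching sphere, which is exactly the rule defining the cellular differential with local coefficients. Once this identification is in place, $E^2_{p,q} = H_p(B; \cH_q(F; i^*\cL))$ by the very definition of homology with local coefficients via cellular chains.

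The main obstacle is the step identifying $E^1$ with cellular chains: one must track carefully how the twisting of $\cL$ and the monodromy of the fibration $\pi$ interact so that the resulting local system on $B$ is exactly $\cH_q(F; i^*\cL)$, and verify in the same spirit that the $E^1$-differential is the associated cellular boundary. The other ingredients (exhaustive filtration, excision, contractibility of cells, convergence of the spectral sequence of a filtered complex) are essentially formal once this is done.
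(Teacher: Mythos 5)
Your argument is essentially the paper's own proof: the paper also takes the filtration of $C_\cdot(E;\cL)$ induced by the skeleton filtration of $B$, defines the coefficient system $\cH_q(F;i^*\cL)$ by combining the fibration's fiber transport with the holonomy of $\cL$, and then declares the identification of the second page to be \emph{mutatis mutandis} the classical untwisted computation, which is exactly the cell-by-cell excision/K\"unneth argument you spell out. The proposal is correct and adds only the explicit details that the paper delegates to the standard reference.
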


\begin{proof}
Let us describe the local system $\cH_\cdot(F;i^*\cL)$. We assume for simplicity that the fibration is locally trivial and that $F$, $E$, $B$ are smooth manifolds, so that we can pick a connection on $E$ that allows to lift paths on the base (in the general case, one can use a lifting function, see for example~\cite[\S4.3]{McCleary}). Given a path $\gamma:[0,1]\to B$, $\gamma(0)=x$, $\gamma(1)=y$ we denote $\Phi_\gamma:F_{x}\stackrel\simeq\longrightarrow F_{y}$ the monodromy diffeomorphism defined by lifting $\gamma$ with arbitrary starting point in $F_{x}$. Given $e\in F_x$ we denote $\tilde \gamma_e$ the unique horizontal lift of $\gamma$ with starting point $e$.

The monodromy induces an isomorphism 
$$
\Phi_{\gamma *}:H_\cdot(F_x;\Phi_\gamma^*\cL|_{F_y})\stackrel\simeq\longrightarrow H_\cdot(F_y;\cL|_{F_y}).
$$
The point now is that we have an isomorphism 
$$
\cT_{\Phi_\gamma}: \cL|_{F_x}\stackrel\simeq\longrightarrow \cL|_{F_y}
$$
given by the holonomy of $\cL$ with respect to all the lifts of $\gamma$ starting at arbitrary points in $F_x$. This isomorphism can be equivalently written as an isomorphism 
$$
\cT_{\Phi_\gamma}':\cL|_{F_x}\stackrel\simeq\longrightarrow \Phi_\gamma^*\cL|_{F_y},
$$
which together with $\Phi_\gamma$ induces an isomorphism 
$$
\Phi_{\gamma *}':H_\cdot(F_x;\cL|_{F_x})\stackrel\simeq\longrightarrow H_\cdot(F_y;\cL|_{F_y}).
$$
It is easy to verify that these isomorphisms depend only on the homotopy class of $\gamma$ with fixed endpoints, and that they are compatible with concatenation, so that they define a local system on $B$ whose fiber at any point $x$ is $H_\cdot(F_x;\cL|_{F_x})$. This local system is denoted $\cH_\cdot(F;i^*\cL)$.

The spectral sequence arises from the filtration on $C_\cdot(E;\cL)$ which is determined by the skeleton filtration on $B$. The computation of the second page is \emph{mutatis mutandis} the same as in the case of constant coefficients, see for example~\cite[\S5.3]{McCleary} for a proof in the setup of singular homology.  
\end{proof}

\subsection{Local systems on loop spaces} \label{sec:loc-sys-loop-space}

Let $M$ be a connected manifold, denote $\pi:\tilde M\to M$ its universal cover, and choose a basepoint in $M$. Denote $\Omega M$ the based loop space and $\Omega_0M$ the component of contractible loops. Denote $L M$ the free loop space and $L_0M$ the component of the contractible loops. Let $\ell\ge 2$ be a prime number.

\emph{Local systems on $\Omega_0M$}. The $\Z/\ell$-local systems on $\Omega_0M$ form an abelian group which is isomorphic to any of the following:
\begin{equation} \label{eq:Hompi1Omega0M}
\begin{aligned}
\Hom(\pi_1(\Omega_0M),\Z/\ell) & \cong \Hom(\pi_2(M),\Z/\ell) \\
& \cong \Hom(\pi_2(\tilde M),\Z/\ell)\cong H^2(\tilde M,\Z/\ell). 
\end{aligned}
\end{equation}
The last isomorphism follows from the universal coefficient theorem in view of $H_1(\tilde M)=0$. Note that $\pi_1(M)$ acts on $\tilde M$ by deck transformations and on $\pi_2(M)=\pi_1(\Omega_0M)$ by conjugation, and the above isomorphisms are equivariant with respect to the induced actions. In particular, there exists a non-trivial $\Z/\ell$-local system on $\Omega_0M$ if and only if $H^2(\tilde M;\Z/\ell)\neq 0$. Moreover, this local system can be chosen to satisfy the condition in the statement of Corollary~\ref{cor:vanishing_based_loops} by taking the fiber to be the field $\C$.

\emph{Local systems on $L_0M$}. The $\Z/\ell$-local systems on $L_0M$ form an abelian group which is isomorphic to $\Hom(\pi_1(L_0M),\Z/\ell)$. To understand this group we start from the split short exact sequence
$$
\xymatrix{
0\ar[r] & \pi_1(\Omega_0M)\ar[r]^i & \pi_1(L_0M) \ar[r]^-\pi & \pi_1(M) \ar@/^/[l]^-s \ar[r] & 1
}
$$
induced by the fibration $\Omega_0M\to L_0M\stackrel \pi\to M$, in which $s$ is induced by the inclusion of constant loops and $\pi$ is the evaluation at the starting point of a loop. In such a situation we have an action of $\pi_1(M)$ on $\pi_1(\Omega_0M)$ by conjugation given by $b(a)=i^{-1}(s(b)i(a)s(b)^{-1})$ for $a\in \pi_1(\Omega_0M)$ and $b\in \pi_1(M)$. This action agrees with the action described above for the based loop space. In particular we have an isomorphism 
$$
\pi_1(L_0M)\cong \pi_1(\Omega_0M)\rtimes \pi_1(M)
$$
which expresses $\pi_1(L_0M)$ as the semi-direct product of $\pi_1(\Omega_0M)$ and $\pi_1(M)$. Recall that the underlying set for the semi-direct product is $\pi_1(\Omega_0M)\times\pi_1(M)$ and the group structure is $(a,b)\cdot(a_1,b_1)=(a+b(a_1),bb_1)$. 

As a consequence, we have an isomorphism
\begin{equation}\label{eq:Hompi1L0M}
\Hom(\pi_1(L_0M),\Z/\ell)\cong \Hom_{inv}(\pi_1(\Omega_0M),\Z/\ell)\times \Hom(\pi_1(M),\Z/\ell),
\end{equation}
where $\Hom_{inv}(\pi_1(\Omega_0M),\Z/\ell)\subseteq \Hom(\pi_1(\Omega_0M),\Z/\ell)$ is the subgroup of $\pi_1(M)$-invariant elements. The above isomorphism associates to any element $\tilde h$ in the group $\Hom(\pi_1(L_0M),\Z/\ell)$ the pair $(\tilde h(\cdot,1),\tilde h(0,\cdot))$, and to a pair $(h,\rho)$ the element $\tilde h:(a,b)\mapsto h(a)+\rho(b)$.

Since the isomorphisms in~\eqref{eq:Hompi1Omega0M} are $\pi_1(M)$-equivariant we have that 
$$
\Hom_{inv}(\pi_1(\Omega_0M),\Z/\ell)\cong \Hom_{inv}(\pi_2(M),\Z/\ell)\cong H^2_{inv}(\tilde M;\Z/\ell).
$$
To get a better grasp on the group $H^2_{inv}(\tilde M;\Z/\ell)$, note that we have inclusions 
$$
\pi^*H^2(M;\Z/\ell)\subseteq H^2_{inv}(\tilde M;\Z/\ell)\subseteq H^2(\tilde M;\Z/\ell).
$$
Either inclusion can be strict, as the following two examples show. 

\noindent \emph{Examples.} (i) Take $\ell=2$, $M=\R P^2$, $\tilde M=S^2$. Then $0=\pi^*H^2(\R P^2;\Z/2)\subset H^2_{inv}(S^2;\Z/2)=H^2(S^2;\Z/2)=\Z/2$ since $\pi$ has degree $2$ and thus $\pi^*$ vanishes with $\Z/2$-coefficients. 

(ii) Take $M$ to be the $4$-torus blown-up at a point, so that $\tilde M$ is $\R^4$ blown-up at all the points in the integral lattice. The group $\pi_1(M)\cong \Z^4$ acts on $\tilde M$ by translations. Thus $\pi^*H^2(M;\Z/\ell)=H^2_{inv}(\tilde M;\Z/\ell)\cong \Z/\ell\subset H^2(\tilde M;\Z/\ell)=\Hom(\Z^4,\Z/\ell)$.

\emph{Restriction and extension of local systems}. Any local system on $L_0M$ restricts to a local system on $\Omega_0M$. Seen through the isomorphism~\eqref{eq:Hompi1L0M}, restriction corresponds to projection onto the first factor
$$
\Hom(\pi_1(L_0M),\Z/\ell)\to \Hom_{inv}(\pi_1(\Omega_0M),\Z/\ell).
$$
It follows that a necessary and sufficient condition for a local system on $\Omega_0M$ to extend to a local system on $L_0M$ is to be $\pi_1(M)$-invariant. Given such an invariant  local system $\cL$ on $\Omega_0 M$, there is a canonical extension to a local system on $L_0M$ which is trivial on $\pi_1(M)$, denoted $\tilde \cL$.

Similarly, the second projection
$$
\Hom(\pi_1(L_0M),\Z/\ell)\to \Hom(\pi_1(M),\Z/\ell)
$$
corresponds to the restriction of a local system on $L_0M$ to a local system on $M$, where we see the latter as the subspace of constant loops. Any local system on $M$ extends to a local system on $L_0M$ which is constant on $\Omega_0M$ by pull-back via the evaluation map $L_0M\stackrel\pi\to M$. 

\emph{Non-trivial local systems}. We are interested in the existence of non-trivial local systems on $L_0M$ which restrict to trivial local systems on $M$. In other words, we want to derive necessary and sufficient conditions for the nontriviality of the group $H^2_{inv}(\tilde M;\Z/\ell)$. To study this group one views $\tilde M$ as a $G$-space with $G=\pi_1(M)$, so that $H^\cdot(\tilde M)$ is a $G$-module. As such we have 
$$
H^2_{inv}(\tilde M)=H^0(G;H^2(\tilde M)),
$$
so that $H^2_{inv}(\tilde M)$ can be studied using the Cartan-Leray spectral sequence~(\cite{Cartan-Leray}, \cite[Theorem~8$^{\mathrm{bis}}$.9]{McCleary}). The latter can be seen as a particular case of the Leray-Serre spectral sequence for the locally trivial fibration 
$$
\tilde M \hookrightarrow \tilde M\times_G BG \stackrel{pr_2}\longrightarrow BG.
$$
Since $G$ acts freely on $\tilde M$ the total space of this fibration is homotopy equivalent to $M=\tilde M/G$, and thus the spectral sequence converges to $H^\cdot(M)$ with second page given by $E_2^{p,q}=H^p(G;H^q(\tilde M))$. Since $H^1(\tilde M)=0$, the definition of convergence implies the following classical exact sequence in low cohomological degrees (\cite{Hopf42}, \cite[Theorem~8$^{\mathrm{bis}}$.10]{McCleary}). 

\begin{proposition} Let $G=\pi_1(M)$ and let $\tilde M\stackrel\pi\longrightarrow M$ be the universal cover of a manifold $M$. There is an exact sequence (with arbitrary coefficients)
\begin{equation} \label{eq:Cartan-Leray}
0\longrightarrow H^2(G)\longrightarrow H^2(M)\stackrel{\pi^*}\longrightarrow H^2_{inv}(\tilde M)\longrightarrow H^3(G)\longrightarrow H^3(M),
\end{equation}
in which the maps $H^\cdot(G)\to H^\cdot(M)$ are induced by the classifying map $M\to BG$ of the principal $G$-bundle $\tilde M\stackrel\pi\longrightarrow M$.
\hfill{$\square$}
\end{proposition}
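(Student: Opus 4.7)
The strategy is to extract the asserted five-term sequence from the Cartan-Leray spectral sequence associated to the $G$-action on $\tilde M$, viewed as the Leray-Serre spectral sequence of the Borel fibration
$$
\tilde M \hookrightarrow \tilde M\times_G EG \stackrel{pr_2}\longrightarrow BG,
$$
where $G=\pi_1(M)$. The first step is to recall that, since $G$ acts freely on $\tilde M$, the total space $\tilde M\times_G EG$ is homotopy equivalent to $M$. With arbitrary coefficients $A$, the Serre spectral sequence then takes the form $E_2^{p,q}=H^p(G;H^q(\tilde M;A))$ and converges to $H^{p+q}(M;A)$, where the $G$-action on $H^q(\tilde M;A)$ is the natural one coming from deck transformations.

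The second step is to exploit that $\tilde M$ is simply connected, so $H^1(\tilde M)=0$. This kills the entire row $q=1$ of $E_2$, which forces $E_r^{p,1}=0$ for all $r\ge 2$ and all $p$. In particular there are no differentials into or out of the row $q=0$ of length $r=2$ reaching the $q=1$ row, so $E_3^{p,0}=E_2^{p,0}=H^p(G;A)$, and the first potentially nontrivial differential on this row is $d_3:E_3^{0,2}\to E_3^{3,0}$.

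The third step is to read off the filtrations on $H^2(M)$ and $H^3(M)$. On $H^2(M)$ the only nonzero subquotients are $E_\infty^{2,0}$ and $E_\infty^{0,2}$: the former equals $E_2^{2,0}=H^2(G;A)$ because its only possible incoming differential originates in $E_2^{0,1}=0$, and the latter equals $\ker\bigl(d_3:H^0(G;H^2(\tilde M;A))\to H^3(G;A)\bigr)$. This yields the short exact sequence $0\to H^2(G;A)\to H^2(M;A)\to \ker d_3 \to 0$, and splicing with $\mathrm{im}\,d_3\hookrightarrow H^3(G;A)$ gives exactness at the term $H^2_{inv}(\tilde M;A)=H^0(G;H^2(\tilde M;A))$. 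Finally, on $H^3(M)$ the bottom filtration piece is $F^3H^3(M)=E_\infty^{3,0}=H^3(G;A)/\mathrm{im}\,d_3$, which embeds in $H^3(M;A)$, so the composite $H^3(G;A)\twoheadrightarrow E_\infty^{3,0}\hookrightarrow H^3(M;A)$ has kernel precisely $\mathrm{im}\,d_3$. This gives exactness at $H^3(G;A)$ in~\eqref{eq:Cartan-Leray}.

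The last step is to identify the four maps $H^p(G;A)\to H^p(M;A)$ for $p=2,3$ with the pullback along the classifying map $M\to BG$. This is the standard identification of the edge homomorphism of the Serre spectral sequence: under the homotopy equivalence $M\simeq \tilde M\times_G EG$ the projection $pr_2$ to $BG$ is precisely (homotopic to) the classifying map of the principal $G$-bundle $\tilde M\to M$, and the edge homomorphism on the $q=0$ line is induced by $pr_2^*$. The main technical point, and the only one requiring care, is checking this identification of the edge homomorphism; everything else is formal spectral-sequence bookkeeping made possible by the vanishing $H^1(\tilde M)=0$.
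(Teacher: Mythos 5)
Your proposal is correct and follows essentially the same route as the paper: the paper derives the sequence from the Cartan--Leray spectral sequence of the Borel fibration $\tilde M\hookrightarrow \tilde M\times_G EG\to BG$, using $H^1(\tilde M)=0$ and citing the classical low-degree exact sequence, while you simply carry out the filtration and edge-homomorphism bookkeeping explicitly. The only detail worth adding is the companion identification that the fiber-restriction edge map $H^2(M)\to E_\infty^{0,2}\subset H^2_{inv}(\tilde M)$ is induced by $\pi$, which is the same standard check you already perform for the base edge maps.
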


The following two statements are straightforward corollaries. 

\begin{proposition} For a fixed choice of coefficients, the group $H^2_{inv}(\tilde M)$ vanishes if and only if the map $H^2(G)\to H^2(M)$ is surjective (an isomorphism) and the map $H^3(G)\to H^3(M)$ is injective.
\hfill{$\square$}
\end{proposition}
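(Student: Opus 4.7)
The plan is to read off the statement directly from the five-term exact sequence~\eqref{eq:Cartan-Leray}, which is already at our disposal. No new ingredient is needed; the argument is a short diagram chase.

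First I would split the five-term sequence~\eqref{eq:Cartan-Leray} into the two exactness conditions that actually involve $H^2_{inv}(\tilde M)$. Writing $\alpha:H^2(G)\to H^2(M)$, $\beta=\pi^*:H^2(M)\to H^2_{inv}(\tilde M)$, $\gamma:H^2_{inv}(\tilde M)\to H^3(G)$, and $\delta:H^3(G)\to H^3(M)$, exactness of~\eqref{eq:Cartan-Leray} gives $\ker\gamma=\operatorname{im}\beta$ and $\operatorname{im}\gamma=\ker\delta$. Since we also have $\ker\beta=\operatorname{im}\alpha$, the short exact sequence
\[
0\longrightarrow \operatorname{im}\beta \longrightarrow H^2_{inv}(\tilde M)\stackrel{\gamma}\longrightarrow \operatorname{im}\gamma\longrightarrow 0
\]
shows that $H^2_{inv}(\tilde M)=0$ if and only if both $\operatorname{im}\beta=0$ and $\operatorname{im}\gamma=0$.

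Next I would translate each of these two vanishings into the desired condition. The equality $\operatorname{im}\beta=0$ means $\beta$ is zero, equivalently $\ker\beta=H^2(M)$; by exactness $\ker\beta=\operatorname{im}\alpha$, so this is equivalent to $\alpha$ being surjective. Similarly, $\operatorname{im}\gamma=0$ is equivalent to $\ker\delta=0$, i.e.\ to injectivity of $\delta$. Combining the two equivalences gives the main claim.

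Finally, for the parenthetical assertion that surjectivity of $\alpha$ upgrades to an isomorphism, I would simply invoke the left end of the exact sequence~\eqref{eq:Cartan-Leray}: the sequence begins with $0\to H^2(G)\stackrel\alpha\to H^2(M)$, so $\alpha$ is automatically injective, and surjectivity then forces it to be an isomorphism. I do not anticipate any obstacle: the entire argument is formal manipulation of the given exact sequence, and the only subtlety is the observation that $\alpha$ is injective for free, which is read off directly from~\eqref{eq:Cartan-Leray}.
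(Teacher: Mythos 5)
Your proof is correct and is exactly the argument the paper has in mind: the proposition is stated as a straightforward corollary of the five-term exact sequence~\eqref{eq:Cartan-Leray}, and your diagram chase (vanishing of $H^2_{inv}(\tilde M)$ forces $\pi^*=0$ and the connecting map to $H^3(G)$ to be zero, hence surjectivity of $H^2(G)\to H^2(M)$ and injectivity of $H^3(G)\to H^3(M)$, and conversely), together with the observation that injectivity of $H^2(G)\to H^2(M)$ is free from the left end of the sequence, is precisely the intended reasoning.
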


\begin{proposition} \label{prop:existence_local_system}
There exists a prime number $\ell$ and a nontrivial $\Z/\ell$-local system on $L_0M$ which restricts to a trivial local system on $M$ if and only if the map $H^2(G;\Z/\ell)\to H^2(M;\Z/\ell)$ is not surjective, or the map $H^3(G;\Z/\ell)\to H^3(M;\Z/\ell)$ is not injective. In addition, this local system can be chosen to satisfy the condition in the statement of Corollary~\ref{cor:vanishing_based_loops}.
\hfill{$\square$}
\end{proposition}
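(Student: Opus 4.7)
The plan is to combine the classification of $\Z/\ell$-local systems on $L_0M$ obtained from~\eqref{eq:Hompi1L0M} with the cohomological description of $H^2_{inv}(\tilde M)$ coming out of the Cartan--Leray exact sequence~\eqref{eq:Cartan-Leray}. In effect the statement is essentially a corollary of what has already been set up, so the work consists in matching dictionaries rather than introducing any new idea.

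The first step is to unpack what it means for a local system to be nontrivial on $L_0M$ yet trivial on $M$ in terms of the isomorphism~\eqref{eq:Hompi1L0M}. Under that isomorphism a $\Z/\ell$-local system on $L_0M$ corresponds to a pair $(h,\rho)\in\Hom_{inv}(\pi_1(\Omega_0M),\Z/\ell)\times\Hom(\pi_1(M),\Z/\ell)$, and, by the explicit description of the second projection recalled just above, $\rho$ is precisely the holonomy of the restriction of the local system to the subspace of constant loops. Triviality on $M$ therefore forces $\rho=0$, in which case nontriviality on $L_0M$ is equivalent to $h\neq 0$. Consequently, the existence of such a local system is equivalent to $\Hom_{inv}(\pi_1(\Omega_0M),\Z/\ell)\neq 0$, and, via the $\pi_1(M)$-equivariant chain of isomorphisms~\eqref{eq:Hompi1Omega0M}, to the non-vanishing of $H^2_{inv}(\tilde M;\Z/\ell)$.

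The second step is a direct appeal to the preceding proposition with $\Z/\ell$-coefficients: the group $H^2_{inv}(\tilde M;\Z/\ell)$ is nonzero precisely when the map $H^2(G;\Z/\ell)\to H^2(M;\Z/\ell)$ fails to be surjective or the map $H^3(G;\Z/\ell)\to H^3(M;\Z/\ell)$ fails to be injective. Combining the two steps yields the stated equivalence. If one had to name a delicate point, it would be verifying once more that the second-projection description of the restriction map on $\Hom$-groups does agree with geometric restriction of local systems via the inclusion of constant loops, but this was already noted in the discussion preceding the proposition.

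For the additional claim, I would realize $\Z/\ell$ as the group of $\ell$-th roots of unity inside $\C^\times$ and take the rank one local system of $R$-modules with $R=\C$ and holonomy valued in this copy of $\Z/\ell$. Since $\C$ is a field and $g\neq 1$ implies $1-g\neq 0$, the condition $1-g\in\C^\times$ is automatic for every $g\in\Z/\ell\setminus\{1\}$, so the hypothesis of Corollary~\ref{cor:vanishing_based_loops} is satisfied.
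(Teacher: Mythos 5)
Your proposal is correct and follows exactly the route the paper intends: the proposition is stated as a direct corollary of the isomorphism~\eqref{eq:Hompi1L0M}, the identification $\Hom_{inv}(\pi_1(\Omega_0M),\Z/\ell)\cong H^2_{inv}(\tilde M;\Z/\ell)$, and the Cartan--Leray exact sequence~\eqref{eq:Cartan-Leray}, with the final claim handled by realizing $\Z/\ell$ as roots of unity in $\C^\times$, just as you do. Nothing is missing.
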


\begin{example} \label{ex:siconnH2}
A general class of manifolds for which there exists a prime $\ell$ such that the map $H^2(G;\Z/\ell)\to H^2(M;\Z/\ell)$ is not surjective is that of simply-connected manifolds with $\pi_2(M)=H_2(M;\Z)\neq 0$. This contains the class of closed simply-connected symplectic manifolds, and in particular the $2$-sphere. 
\end{example}

The previous example is subsumed by the following criterion for non-surjectivity of the map $H^2(G)\to H^2(M)$.

\begin{proposition} \label{prop:pi2}
The map $H^2(G;\Z/\ell)\to H^2(M;\Z/\ell)$ is not surjective for some prime $\ell$ if and only if the Hurewicz map $\pi_2(M)\to H_2(M;\Z)$ is nonzero. 
\end{proposition}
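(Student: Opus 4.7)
Proof plan.

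My plan is to recast condition (i) as a direct statement about the Hurewicz map by combining the Cartan--Leray exact sequence with the universal coefficient theorem applied to $\tilde M$. From the preceding proposition and the Cartan--Leray sequence \eqref{eq:Cartan-Leray} with $\Z/\ell$ coefficients, $H^2(G;\Z/\ell)\to H^2(M;\Z/\ell)$ is non-surjective if and only if the pullback $\pi^*:H^2(M;\Z/\ell)\to H^2(\tilde M;\Z/\ell)$ is nonzero (it factors through $H^2_{inv}(\tilde M;\Z/\ell)\hookrightarrow H^2(\tilde M;\Z/\ell)$). Since $H_1(\tilde M;\Z)=0$ and $\Z/\ell$ is a field, UCT yields a canonical identification $H^2(\tilde M;\Z/\ell)\cong\Hom(H_2(\tilde M;\Z),\Z/\ell)$, which via the Hurewicz isomorphism on $\tilde M$ becomes $\Hom(\pi_2(M),\Z/\ell)$. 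Under this identification, for $x\in H^2(M;\Z/\ell)$ the class $\pi^*x$ corresponds to the homomorphism $[f]\mapsto\langle x,h([f])\rangle$, where $h:\pi_2(M)\to H_2(M;\Z)$ is the Hurewicz map of $M$ and $\langle,\rangle$ denotes the Kronecker pairing (which factors through the $\Hom$-part of UCT on $M$).

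From this reformulation the direction (i)$\Rightarrow$(i') is immediate: a nonzero $\pi^*x$ produces some $[f]\in\pi_2(M)$ with $\langle x,h([f])\rangle\neq 0$ in $\Z/\ell$, and therefore $h([f])\neq 0$, so the Hurewicz map is nonzero.

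For the converse (i')$\Rightarrow$(i), fix $[f]$ with $\gamma:=h([f])\neq 0$ in $H_2(M;\Z)$ and seek a prime $\ell$ together with $\phi\in\Hom(H_2(M;\Z),\Z/\ell)$ satisfying $\phi(\gamma)\neq 0$; such a $\phi$ is then realized by a cohomology class $x\in H^2(M;\Z/\ell)$ via the surjective part of UCT. Using the structure theorem to write $H_2(M;\Z)\cong\Z^r\oplus\bigoplus_j\Z/n_j$, I split $\gamma$ into a free and a primary torsion part: if $\gamma$ has a nonzero free component, any prime $\ell$ coprime to that component produces a valid $\phi$ by projection onto a $\Z$-summand followed by reduction mod $\ell$; if $\gamma$ is pure torsion, I select a nontrivial $p$-primary component $\gamma_p$ of $\gamma$ that is a generator of its cyclic summand $\Z/p^k$ (i.e., coprime to $p$ inside that summand) and take $\phi$ to be the composition of the projection onto that summand with the reduction mod $p$.

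The main technical step is precisely the existence of such a generator-type primary component, equivalently the assertion that $\gamma\notin\bigcap_\ell\ell H_2(M;\Z)$. This is the point where the manifold structure of $M$ and the Hopf formula $\image(h)=\ker(H_2(M;\Z)\twoheadrightarrow H_2(G;\Z))$ must be invoked in order to exclude algebraic pathologies that could in principle embed a nonzero $\image(h)$ entirely into the maximal ``primes-divisible'' subgroup of $H_2(M;\Z)$; once this detectability is secured, the construction of $\phi$ and of the corresponding cohomology class $x$ completes the proof.
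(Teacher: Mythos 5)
Your reduction is correct as far as it goes: via the Cartan--Leray sequence and the universal coefficient theorem on $\tilde M$, condition (i) for a prime $\ell$ is indeed equivalent to the existence of $x\in H^2(M;\Z/\ell)$ and $[f]\in\pi_2(M)$ with $\langle x,h([f])\rangle\neq 0$, and the direction (i)$\Rightarrow$(i') follows at once. The gap is exactly the step you flag and postpone: you need some nonzero element of $\mathrm{im}(h)$ to lie outside $\bigcap_\ell \ell H_2(M;\Z)$, and you offer nothing beyond the hope that ``the manifold structure and the Hopf formula'' exclude the contrary. They do not: the Hopf formula only identifies $\mathrm{im}(h)=\ker\bigl(H_2(M;\Z)\to H_2(G;\Z)\bigr)$, and this kernel can be a nonzero subgroup consisting entirely of classes divisible by every prime. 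Concretely, let $G=\Z/2\times\Z/2$ with presentation complex $K$ (relators $a^2$, $b^2$, $[a,b]$); then $H_2(K;\Z)\cong\Z$, generated by the commutator $2$-cell, and Hopf's formula together with $H_2(G;\Z)=\Z/2$ gives $\mathrm{im}(h_K)=2\Z$. Attach a $3$-cell along some $\alpha\in\pi_2(K)$ whose Hurewicz image is $4$ times the generator (it exists since $4\in 2\Z$). The resulting finite complex $X$ has $\pi_1(X)=G$, $H_2(X;\Z)\cong\Z/4$ and $\mathrm{im}(h_X)=2(\Z/4)\neq 0$; but every class $h([f])$ is then twice an integral class, so $\langle x,h([f])\rangle=0$ for all $x\in H^2(X;\Z/2)$, and for odd $\ell$ the pairing vanishes because $h([f])$ is $2$-torsion. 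By your own reformulation, $H^2(G;\Z/\ell)\to H^2(X;\Z/\ell)$ is therefore surjective for \emph{every} prime $\ell$, although the Hurewicz map is nonzero. Thickening $X$ in $\R^N$ for large $N$ and doubling the regular neighborhood produces a closed manifold with the same $\pi_1$, $H_1$, $H_2$ and the same classifying-map data, so the implication (i')$\Rightarrow$(i) genuinely fails in the manifold setting as well; the detectability you hope to ``secure'' is not available.

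What your correct first part actually proves is that (i) for a given $\ell$ is equivalent to the nonvanishing of the mod-$\ell$ Hurewicz map $\pi_2(M)\to H_2(M;\Z/\ell)$, i.e.\ to $\mathrm{im}(h)\not\subseteq\ell H_2(M;\Z)$; hence (i) for some prime is equivalent to $\mathrm{im}(h)\not\subseteq\bigcap_\ell\ell H_2(M;\Z)$, which is strictly stronger than (i') in general, though it coincides with (i') in the cases that drive the applications (for instance when $M$ is simply connected, where $\mathrm{im}(h)=H_2(M;\Z)$ is a nonzero finitely generated group and cannot be divisible by all primes). Be aware also that you cannot import the missing step from the paper: its proof dualizes the Hopf sequence through the universal coefficient theorem, and $\Hom(-,\Z)$ and $\Hom(-,\Z/\ell)$ are blind to precisely the divisible torsion in $\ker f_*$ exhibited above, which is the same blind spot at which your argument stalls. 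The honest conclusion is to replace (i') by the mod-$\ell$ statement (or to add the hypothesis that the spherical classes are not divisible by all primes), rather than to expect the step you left open to be provable.
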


\begin{proof}
This is a consequence of the Hopf exact sequence~(\cite{Hopf42}, \cite[p.~339]{McCleary})
$$
\pi_2(M)\stackrel{h}\longrightarrow H_2(M;\Z)\stackrel{f_*}\longrightarrow H_2(G;\Z)\to 0,
$$
in which the first map $h$ is the Hurewicz homomorphism, and the second map $f_*$ is induced by the classifying map $f:M\to BG$ for the principal $G$-bundle $\tilde M\to M$. The non-vanishing of $h$ is equivalent to the map $f_*$ not being an isomorphism. By the universal coefficient theorem and the fact that $f_*:H_1(M;\Z)\to H_1(G;\Z)$ is an isomorphism, this is equivalent to  $f^*:H^2(G;\Z)\to H^2(M;\Z)$ not being an isomorphism, which is equivalent to $f^*:H^2(G;\Z) \to H^2(M;\Z)$ not being surjective. The conclusion follows. 
\end{proof}

\begin{example} \label{ex:stability} The condition in Proposition~\ref{prop:pi2} is preserved under taking products with one such manifold, as well as under connected sums with one such summand in dimension at least $3$. 
\end{example}

\begin{example} \label{ex:surgery}
One can construct examples such that the map $H^3(G)\to H^3(M)$ is not injective using surgery. More precisely, we construct examples such that $H^3(M)=0$ and $H^3(G)\neq 0$. Indeed, for any finitely presented group $G$ such that $H^3(G)\neq 0$ one can build a closed manifold $M$ of dimension $\ge 6$ with $\pi_1(M)\simeq G$ and $H^3(M)=0$. Given a presentation of $G$ with generators $g_1,\dots,g_n$ and relations $r_1,\dots,r_m$, start with a connected sum of $n$ copies of $S^1\times S^k$ with $k\ge 5$, represent the relations $r_1,\dots,r_m$ by $m$ disjoint embedded loops, then perform surgery in order to kill those. The fundamental group of the resulting manifold is $G$. We now show that $H^3(M)=0$. Indeed, $M$ is a CW-complex without 3-cells. Therefore, there is no torsion in $H_2(M)$ and $H_3(M)=0$. It follows from \cite[Theorem 3.2]{Hatcher} that $H^3(M)\cong\mathrm{Hom}(H_3(M),\Z)\oplus\mathrm{Ext}(H_2(M),\Z)=\mathrm{Ext}(H_2(M),\Z)$. Since $H_2(M)$ is finitely generated and torsion-free it is actually free and therefore $\mathrm{Ext}(H_2(M),\Z)=0$, see \cite{Hatcher}.
\end{example}

\noindent {\sc Application.}

\emph{Vanishing of twisted homology of the free loop space.}

\begin{theorem}\label{thm:vanishing-homology} Let $M$ be a manifold such that the image of the Hurewicz map 
$$
\pi_2(M)\to H_2(M;\Z)
$$
is nonzero. For any rank $1$ local system $\cC$ on $M$ there exists a local system of coefficients $\cL$ on $L_0M$ whose restriction to the space of constant loops $M\subset L_0M$ is isomorphic to $\cC$, and such that 
$$
H_\cdot(L_0M;\cL)=0.
$$ 
The same conclusion holds more generally if condition (C) in the Introduction holds. 
%there exists a prime number $\ell\ge 2$ such that the map $H^2(G;\Z/\ell)\to H^2(M;\Z/\ell)$ is not surjective, or the map $H^3(G;\Z/\ell)\to H^3(M;\Z/\ell)$ is not injective, with $G=\pi_1(M)$.
\end{theorem}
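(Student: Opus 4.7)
The plan is to construct $\cL$ by tensoring a ``killer'' local system on $L_0M$ with the pullback of $\cC$ along the evaluation fibration, then to annihilate the $E^2$-page of the Leray-Serre spectral sequence by invoking the vanishing of twisted homology on the based loop space.

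First I would introduce the evaluation fibration $\Omega_0M\stackrel{i}{\hookrightarrow}L_0M\stackrel{\pi}{\to}M$, where $\pi$ sends a loop to its basepoint. Condition (C) combined with Proposition~\ref{prop:existence_local_system} produces a nontrivial $\Z/\ell$-local system $\cL_0$ on $L_0M$, with fiber $\C$ and holonomy in the $\ell$-th roots of unity $\Z/\ell\subset\C^\times$, whose restriction to the subspace of constant loops $M\subset L_0M$ is trivial. Define
\[
\cL:=\cL_0\otimes_\C \pi^*\cC.
\]
This is a rank $1$ local system with fiber $\C$ on $L_0M$.

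Next I would verify the two restriction properties. Since $\pi$ restricts to the identity on the constants $M\subset L_0M$, we get $\pi^*\cC|_M=\cC$ and $\cL_0|_M$ is trivial by construction, hence $\cL|_M\cong\cC$ as required. For the restriction to the fiber, the composition $\pi\circ i:\Omega_0M\to M$ is the constant map at the basepoint, so $(\pi\circ i)^*\cC$ is canonically trivial on $\Omega_0M$ and therefore $i^*\cL\cong i^*\cL_0$ as rank $1$ $\C$-local systems. Using the semidirect product decomposition $\pi_1(L_0M)\cong\pi_1(\Omega_0M)\rtimes\pi_1(M)$ from \eqref{eq:Hompi1L0M}, a local system on $L_0M$ that is nontrivial but restricts trivially to $\pi_1(M)$ must be nontrivial on the $\pi_1(\Omega_0M)$ factor; thus $i^*\cL$ is a nontrivial rank $1$ $\C$-local system on $\Omega_0M$.

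Then I would apply Corollary~\ref{cor:vanishing_based_loops}: since $\Omega_0M$ is an $H$-space and $R=\C$ is a field, the nontrivial local system $i^*\cL$ satisfies $H_\cdot(\Omega_0M;i^*\cL)=0$. Feeding this into the Leray-Serre spectral sequence of Proposition~\ref{prop:LS} for the fibration $\Omega_0M\hookrightarrow L_0M\to M$ with coefficients in $\cL$, one has
\[
E^2_{p,q}=H_p\bigl(M;\cH_q(\Omega_0M;i^*\cL)\bigr)=0
\]
for all $p,q\ge 0$, since the local system of fiberwise homology groups is already zero as a coefficient system. The spectral sequence converges to $H_\cdot(L_0M;\cL)$, which therefore vanishes.

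The real content of the argument has already been done in the preceding sections: producing the auxiliary local system $\cL_0$ with the precise restriction behavior rests on the Cartan--Leray analysis of $H^2_{inv}(\tilde M;\Z/\ell)$ leading to condition (C), and the vanishing on the fiber rests on the Pontryagin product argument on $\Omega_0M$. The only delicate bookkeeping step in the present proof is the observation that triviality of $\cL_0$ on the constant loops forces nontriviality on the based loop fiber via the split exact sequence for $\pi_1(L_0M)$; once this is noted, the Leray-Serre vanishing is immediate and the spectral sequence collapses to zero.
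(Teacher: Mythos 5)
Your argument is correct and is essentially the paper's own proof: produce the nontrivial $\Z/\ell$-local system on $L_0M$ trivial on constants via Proposition~\ref{prop:pi2} and Proposition~\ref{prop:existence_local_system}, tensor with $\pi^*\cC$, kill the fiberwise homology by Corollary~\ref{cor:vanishing_based_loops}, and conclude with the Leray-Serre spectral sequence of Proposition~\ref{prop:LS}. Your explicit check that triviality on $M$ plus nontriviality on $L_0M$ forces nontriviality on $\Omega_0M$ (via the split exact sequence for $\pi_1(L_0M)$) just spells out what Proposition~\ref{prop:existence_local_system} already guarantees, so there is no substantive difference.
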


\begin{proof} By Proposition~\ref{prop:pi2} and Proposition~\ref{prop:existence_local_system} there exists a prime number $\ell$ and a nontrivial $\Z/\ell$-local system $\tilde \cL$ which satisfies the condition in Corollary~\ref{cor:vanishing_based_loops}
and which restricts to the trivial local system on $M$. By that same corollary we have $H_\cdot(\Omega_0M;\tilde \cL|_{\Omega_0M})=0$. 

Let $\pi:L_0M\to M$ be the evaluation map. We claim that $\cL=\tilde \cL\otimes \pi^*\cC$ satisfies the conclusion of the Theorem. Indeed, since $\cL|_{\Omega_0M}=\tilde \cL|_{\Omega_0 M}$ we infer that the Leray-Serre spectral sequence in Proposition~\ref{prop:LS} for the fibration $\Omega_0M\to L_0M\stackrel\pi\to M$ has trivial second page, and its target $H_\cdot(L_0M;\cL)$ must necessarily be zero.   
\end{proof}

\subsection{More on the Pontryagin product with twisted coefficients} We discuss in this section the Pontryagin product in twisted homology for $\Omega M$ (as opposed to $\Omega_0M$ as in the previous paragraphs). The material in this section in not used elsewhere in the paper, but we include it for the sake of completeness.

Fix a basepoint on the connected manifold $M$. In view of the equality $\pi_0(\Omega M)=\pi_1(M)$, we label the connected components of $\Omega M$ as $\Omega_gM$, $g\in \pi_1(M)$. Given any element $g\in\pi_1(M)$ we have homotopy-equivalences which are well-defined up to homotopy 
$$
L_g:\Omega_hM\to \Omega_{gh}M,\qquad R_g:\Omega_hM\to \Omega_{hg}M,
$$
given by concatenation on the left, respectively on the right with a representative of $g$. In an equivalent formulation these are the left-, respectively right translation by (a representative of) $g$ in the $H$-space $\Omega M$. 

Given a local system $\cL$ on $\Omega_0M$, we obtain two local systems 
$L^*\cL$ and $R^*\cL$ on $\Omega M$ which are well-defined up to isomorphism, given by 
$$
(L^*\cL)|_{\Omega_gM}=L_g^*\cL, \qquad (R^*\cL)|_{\Omega_gM}=R_g^*\cL,\qquad g\in \pi_1M.
$$

If $\cL$ is a $G$-local system on $\Omega_0M$ which is invariant under conjugation, i.e. the corresponding holonomy representation belongs to $\Hom_{inv}(\pi_1(\Omega_0M),G)/G$ where $\pi_1(M)$ acts as usual at the source by conjugation, then $L_g^*\cL=R_g^*\cL$ for all $g\in \pi_1(M)$. Indeed, denoting $conj_g:\Omega M\to \Omega M$, $x\mapsto g xg^{-1}$, we have $L_g=R_g\circ conj_g$, whereas invariance under conjugation translates into $conj_g^*\cL=\cL$. We denote the corresponding extension of $\cL$ to $\Omega M$ by 
$$
\tilde\cL=L^*\cL=R^*\cL.
$$

\begin{proposition} \label{prop:Pontryagin-OmegaM} Let $R$ be a ring. For any rank one $G$-local system of $R$-modules $\cL$ on $\Omega_0M$ which is invariant under conjugation, the Pontryagin product defines a unital ring structure on $H_\cdot(\Omega M;\tilde \cL)$. The unit is represented by the class of the constant loop.
\end{proposition}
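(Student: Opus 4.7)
The plan is to adapt the proof of Proposition~\ref{prop:Hspace} to the disconnected $H$-space $\Omega M$, whose components $\Omega_gM$ are indexed by $g\in\pi_1(M)$. The first step is to establish the analog of the lemma used in the proof of Proposition~\ref{prop:Hspace}, namely a canonical isomorphism
$$
pr_1^*\tilde\cL\otimes_R pr_2^*\tilde\cL \;\cong\; m^*\tilde\cL
$$
of local systems on $\Omega M\times\Omega M$. On each pair of components $\Omega_gM\times\Omega_hM$ I would fix basepoints $\gamma_g$, $\gamma_h$ and use the homotopy equivalences $L_{\gamma_g}$ and $L_{\gamma_h}$ from $\Omega_0M$ to trivialize the fibers of $\tilde\cL$ by $R$. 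To verify monodromy compatibility, given a loop $\alpha=(\alpha_1,\alpha_2)$ based at $(\gamma_g,\gamma_h)$, the standard Eckmann--Hilton homotopy shows that $m\circ\alpha$ is homotopic in $\Omega_{gh}M$ to the concatenation $(R_{\gamma_h}\circ\alpha_1)\cdot(L_{\gamma_g}\circ\alpha_2)$. Under the chosen trivializations, the holonomy of $\tilde\cL$ along each factor becomes the holonomy of $\cL$ along a conjugate (by the basepoint loops) of a loop representing $\alpha_1$, respectively $\alpha_2$, in $\Omega_0M$. The conjugation-invariance hypothesis on $\cL$ makes these holonomies depend only on the underlying classes, reducing the verification to the computation already carried out in the proof of that lemma.

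With the isomorphism in hand, I would define the Pontryagin product on $C_\cdot(\Omega M;\tilde\cL)$ as the composition of the Eilenberg--MacLane shuffle map $B$ with the map $m_*$ induced by concatenation, verbatim as in Proposition~\ref{prop:Hspace}. Associativity follows from the homotopy-associativity of $m$ and from associativity in the ring $R$. For the unit, I would use that the constant loop $c_*$ at the basepoint is a two-sided homotopy unit for $m$ on every component $\Omega_gM$: left and right concatenation with $c_*$ preserve $\Omega_gM$ and are homotopic to the identity there. Hence the class $[1\otimes c_*]\in H_0(\Omega M;\tilde\cL)$ is the desired unit.

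The hard part will be the extension of the lemma to the disconnected setting. In the path-connected case one only had to check the isomorphism at the single fiber over $(e,e)$; here the check must be carried out over every pair of components, and one must show that the two trivializations of $\tilde\cL$ inherited from $\Omega_0M$ by left- and right-translation interact compatibly with $m$. This is exactly the content of the conjugation-invariance hypothesis: without it the local systems $L^*\cL$ and $R^*\cL$ on $\Omega M$ would not even agree, $\tilde\cL$ would not be well-defined, and the two natural candidates for trivializing $m^*\tilde\cL$ would disagree.
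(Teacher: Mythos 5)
Your proof is correct, but it is organized differently from the paper's. You extend the Lemma from the proof of Proposition~\ref{prop:Hspace} to the disconnected $H$-space $\Omega M$, producing an isomorphism $pr_1^*\tilde\cL\otimes_R pr_2^*\tilde\cL\cong m^*\tilde\cL$ over each pair of components and then running the chain-level construction (shuffle map followed by $m_*$) verbatim; conjugation-invariance enters in your monodromy check, where $m\circ\alpha$ is compared with $(R_{\gamma_h}\circ\alpha_1)\cdot(L_{\gamma_g}\circ\alpha_2)$ and the basepoint loops conjugate classes in $\pi_1(\Omega_0M)\cong\pi_2(M)$ by elements of $\pi_1(M)$. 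The paper never re-proves the Lemma on $\Omega M$: it transports the already-established product on $H_\cdot(\Omega_0M;\cL)$ of Proposition~\ref{prop:Pontryagin} to the components via the square $c\circ(L_g\times R_h)\simeq (L_g\circ R_h)\circ c$, which commutes up to homotopy, and uses conjugation-invariance only to identify $L_{g^{-1}}^*\cL$, $R_{h^{-1}}^*\cL$ and $(L_{g^{-1}}\circ R_{h^{-1}})^*\cL$ with the restrictions of $\tilde\cL$, so that the componentwise operation $H_\cdot(\Omega_gM;\tilde\cL)\otimes H_\cdot(\Omega_hM;\tilde\cL)\to H_\cdot(\Omega_{gh}M;\tilde\cL)$ is induced by the one on the identity component. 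Your route gives a direct chain-level product on all of $\Omega M$ at once and makes explicit exactly where invariance is needed; the paper's route is shorter, exhibits the product as $\pi_1(M)$-graded, and sidesteps the bookkeeping your approach requires, namely choosing the basepoints $\gamma_g$, the representative $\gamma_{gh}$ versus $\gamma_g\cdot\gamma_h$, and the fiber trivializations coherently so that the component isomorphisms are compatible in the associativity check. This does work out --- the holonomy lies in the abelian group $R^\times$ and is conjugation-invariant, so the ambiguities cancel --- but it deserves an explicit sentence in your write-up, at the same level of care as your monodromy computation.
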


\begin{proof}
The Pontryagin product acts naturally as 
$$
H_\cdot(\Omega_gM;L_{g^{-1}}^*\cL)\otimes H_\cdot(\Omega_hM;R_{h^{-1}}^*\cL)\to H_\cdot(\Omega_{gh}M;(L_{g^{-1}}\circ R_{h^{-1}})^*\cL).
$$
This is a consequence of the discussion leading to Proposition~\ref{prop:Pontryagin} and of the diagram  
$$
\xymatrix
@C=60pt
{
\Omega_gM\times\Omega_hM\ar[r]^-c & \Omega_{gh}M\\
\Omega_0M\times\Omega_0M \ar@<1ex>[u]^{L_g} \ar@<-1ex>[u]_{R_h} \ar[r]^-c & \Omega_0M, \ar[u]_{L_g\circ R_h}
}
$$
which commutes up to homotopy and which exhibits the previous operation as being induced by the corresponding operation on $H_\cdot(\Omega_0M;\cL)$. The conclusion follows readily in case the local system $\cL$ is conjugation invariant. 
\end{proof}
 
\noindent {\bf Remark.} We find it remarkable that the class of local systems for which the Pontryagin product is defined on twisted homology of $\Omega M$ is the same as that of local systems on $\Omega_0M$ which extend to $L_0M$, i.e. local systems which are invariant under conjugation.

%\emph{Chas-Sullivan product with twisted coefficients}
%
%
%Reach the point at which one can state that the Leray-Serre spectral sequence is a spectral sequence of algebras (for the untwisted version, see~\cite{Cohen-Jones-Yan}). 
%

%%%%%%%%%%%%%%%%%%%%%%%%%%%%%%%%%%%%%%%%%%%%
%%%%%%%%%%%%%%%%%%%%%%%%%%%%%%%%%%%%%%%%%%%%
%%%%%%%%%%%%%%%%%%%%%%%%%%%%%%%%%%%%%%%%%%%%
%%%%%%%%%%%%%%%%%  Section 2  %%%%%%%%%%%%%%%%%%%%%
%%%%%%%%%%%%%%%%%%%%%%%%%%%%%%%%%%%%%%%%%%%%
%%%%%%%%%%%%%%%%%%%%%%%%%%%%%%%%%%%%%%%%%%%

\section{Vanishing of symplectic homology with local coefficients and applications}

We present in this section two applications of our vanishing result for the twisted homology of free loop spaces. The first concerns the existence of nonconstant pseudoholomorphic planes in certain cotangent bundles. The second is a finiteness result for the $\pi_1$-sensitive Hofer-Zehnder capacity. 

Both our applications rely on the following vanishing theorem, which is a straightforward consequence of the results of~\cite{Abouzaid-cotangent,AS-corrigendum} together with the discussion in~\S\ref{sec:local_systems}, 
see also~\cite{Viterbo-cotangent, AS, SW, Ritter} for previous versions of the theorem valid for slightly less general coefficients. 
For the statement recall that the \emph{orientation local system} $o_M$ of a manifold $M$ is the $\Z/2$-local system whose holonomy along a loop $\gamma$ equals $\mathrm{Id}$ if the orientation is preserved by parallel transport along $\gamma$, and $-\mathrm{Id}$ if the orientation is reversed by parallel transport along $\gamma$. Given a local system $\cL$ on $L_0T^*M$, the space of contractible loops on $T^*M$, we denote $SH_\cdot^0(T^*M;\cL)$ the \emph{symplectic homology group in the component of contractible loops with coefficients in $\cL$}, see for example~\cite{Abouzaid-cotangent} for the definition.  

\begin{theorem} \label{thm:vanishing}
Let $M$ be a closed manifold satisfying condition~(C) in the Introduction. 
For any rank $1$ local system $\cC$ on $M$ there exists a local system of coefficients $\cL$ on $L_0T^*M$ such that $\cL|_{M}\simeq \cC$ and such that $SH_\cdot^0(T^*M;\cL)$ vanishes.
\end{theorem}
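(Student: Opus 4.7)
\bigskip

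\noindent\textbf{Proof proposal.} The plan is to reduce the statement to the already-established Theorem~\ref{thm:vanishing-homology} by means of the twisted Viterbo--Abouzaid isomorphism. Concretely, I will (i) package the local coefficients $\cC$ together with Abouzaid's transgressive twist so as to produce a local system on $L_0M$ to which Theorem~\ref{thm:vanishing-homology} applies, (ii) transport the resulting vanishing to $T^*M$ via the Viterbo-type isomorphism, and (iii) check that the restriction to the zero section still matches~$\cC$.

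First, I will identify local systems on $L_0T^*M$ with local systems on $L_0M$ via the fiberwise deformation retract $T^*M\rightsquigarrow M$, noting that this identification is compatible with restriction to the subspace of constant loops. Next, I will invoke the twisted version of the Viterbo isomorphism, as in~\cite{Abouzaid-cotangent, AS-corrigendum} (with earlier versions in~\cite{Viterbo-cotangent, AS, SW, Ritter}): there exists a canonical rank-one local system $\eta$ on $L_0M$, obtained by transgressing a characteristic class built from $w_2(M)$, such that for every local system $\cK$ on $L_0M$ there is an isomorphism
$$
SH_\cdot^0(T^*M;\cK)\;\cong\; H_\cdot(L_0M;\cK\otimes \eta).
$$
The crucial property of $\eta$ for us is that it is trivial on the space of constant loops $M\subset L_0M$, since any transgressive class vanishes on constant loops.

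With this in hand, I proceed as follows. Applying Theorem~\ref{thm:vanishing-homology} to the rank-one local system $\cC$ on $M$ produces a local system $\cL'$ on $L_0M$ with
$$
\cL'|_M\simeq \cC \qquad \text{and}\qquad H_\cdot(L_0M;\cL')=0.
$$
I then set
$$
\cL \;:=\; \cL'\otimes \eta^{-1},
$$
regarded as a local system on $L_0T^*M$ via the deformation retract. Restricting to $M$ gives $\cL|_M\simeq \cL'|_M\otimes (\eta^{-1}|_M)\simeq \cC\otimes\underline{\C}\simeq \cC$, using that $\eta|_M$ is trivial. Finally, the twisted Viterbo--Abouzaid isomorphism yields
$$
SH_\cdot^0(T^*M;\cL)\;\cong\; H_\cdot(L_0M;\cL\otimes\eta)\;\cong\; H_\cdot(L_0M;\cL')\;=\;0,
$$
which is the desired conclusion.

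The only nontrivial step is invoking the twisted Viterbo isomorphism with coefficients in an arbitrary (not necessarily constant) rank-one local system, and in the non-orientable case this is exactly where the results of Abouzaid~\cite{Abouzaid-cotangent} and the corrigendum~\cite{AS-corrigendum} are needed; in the orientable case the twist $\eta$ is trivial and the argument collapses to a direct application of Theorem~\ref{thm:vanishing-homology} through the standard Viterbo isomorphism. The rest of the proof is purely formal manipulation of tensor products of local systems, together with the observation that transgressive twists restrict trivially to constant loops.
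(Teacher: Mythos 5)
Your overall strategy coincides with the paper's: combine Theorem~\ref{thm:vanishing-homology} with the twisted Viterbo--Abouzaid isomorphism, transported to $L_0T^*M$ via the retraction onto $L_0M$. There is, however, a genuine gap in the non-orientable case, which this theorem is explicitly meant to cover (this is why \cite{Abouzaid-cotangent} is invoked at all). You assert that the twist $\eta$ restricts trivially to the space of constant loops because ``any transgressive class vanishes on constant loops''. That is correct when $M$ is orientable, where $\eta$ is the transgression of $w_2(M)$; but in general Abouzaid's local system restricts on the constant loops of the zero section to the orientation local system $o_M$, which is nontrivial for non-orientable $M$. Consequently, with the isomorphism $SH_\cdot^0(T^*M;\cK)\simeq H_\cdot(L_0M;\cK\otimes\eta)$ and $\eta|_M\simeq o_M$, your choice $\cL=\cL'\otimes\eta^{-1}$ satisfies $\cL|_M\simeq \cC\otimes o_M$ rather than $\cC$, so the required condition $\cL|_M\simeq\cC$ fails whenever $M$ is non-orientable.

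The repair is immediate and is exactly what the paper does: since Theorem~\ref{thm:vanishing-homology} applies to an arbitrary rank $1$ local system on $M$, apply it to $\cC\otimes o_M$ instead of $\cC$; the extra factor then cancels against $\eta|_M$ because $o_M^{\otimes 2}$ is trivial, giving $\cL|_M\simeq\cC\otimes o_M^{\otimes 2}\simeq\cC$ while the vanishing of $SH_\cdot^0$ is unaffected. A secondary inaccuracy: your closing remark that $\eta$ is trivial whenever $M$ is orientable is not correct --- the transgression of $w_2(M)$ need not vanish (e.g.\ for $M=\C P^2$ it is nontrivial on $\pi_1(L_0M)$, even though it is trivial on constant loops), so the argument does not ``collapse to the standard Viterbo isomorphism'' in the orientable case in general; this does not affect your main line of reasoning, since you carry $\eta$ along throughout, but the claim as stated is false.
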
 

%\noindent {\bf Remark.} As explained in Proposition~\ref{prop:pi2}, condition~(C) is implied by the condition
%
%{\it
%\begin{enumerate}
%\item[(i')] The Hurewicz map $\pi_2(M)\to H_2(M;\Z)$ is nonzero.
%\end{enumerate}
%}

\begin{proof} We know from~\cite{Abouzaid-cotangent}, see also~\cite{AS,AS-corrigendum} for the orientable case and~\cite{Ritter} for a discussion of local coefficients, that there exists a $\Z/2$-local system $\eta$ on $L_0T^*M$, which restricts to the orientation local system $o_M$ on the space of constant loops on the zero section, and such that 
$$
SH_\cdot^0(T^*M;\cL\otimes\eta)\simeq H_\cdot(L_0M;\cL|_{L_0 M})
$$
for any local system $\cL$ on $L_0T^*M$. In the orientable case, this local system is defined by transgressing the second Stiefel-Whitney class of $M$, i.e. the holonomy along a loop in $L_0T^*M$ is $\pm\mathrm{Id}$ according to whether the pull-back of the second Stiefel-Whitney class of $M$ to $T^*M$ evaluates trivially or not on the torus $S^1\times S^1\to T^*M$ determined by that loop. The definition in the non-orientable case is more involved, see~\cite[Chapter~11]{Abouzaid-cotangent}. 

By Theorem~\ref{thm:vanishing-homology}, let $\tilde \cL$ be a local system on $L_0M$ such that $\tilde \cL|_M\simeq \cC\otimes o_M$ and $H_\cdot(L_0M;\tilde \cL)=0$. Since the natural projection $L_0T^*M\stackrel\pi\longrightarrow L_0M$ is a deformation retract, this local system admits a unique extension $\pi^*\tilde \cL$ to a local system on $L_0T^*M$. We obtain $SH_\cdot^0(T^*M;\pi^*\tilde \cL\otimes\eta)=0$ and $\pi^*\tilde \cL\otimes\eta|_M\simeq \cC\otimes o_M^{\otimes 2}\simeq \cC$, so that $\cL=\pi^*\tilde \cL\otimes \eta$ satisfies the desired requirements.  
\end{proof}

\subsection{Uniruledness} \label{sec:uniruledness}

That the vanishing of symplectic homology (with untwisted coefficients) can be used in order to produce pseudoholomorphic curves in Liouville manifolds was first shown by Anne-Laure Biolley in her groundbreaking thesis~\cite{Biolley-thesis}, see also~\cite{Biolley-arxiv}. Biolley founded a whole new theory of symplectic hyperbolicity, motivated by a remarkable theorem of Bangert~\cite{Bangert}.

\begin{definition} \label{defi:uniruled}
Let $\hat W$ be a Liouville manifold of finite type. We say that $\hat W$ is \emph{uniruled} if, for any choice of compatible almost complex structure $J$ which is cylindrical outside a compact set with respect to some Liouville structure, and for any choice of point $x\in\hat W$, there exists a non-constant finite energy pseudo-holomorphic plane $(\C,i)\to (\hat W,J)$ through $x$. 
\end{definition}

We refer to~\cite{Cieliebak-Eliashberg} for the definition of Liouville manifolds of finite type and of homotopies of Liouville structures. Our point of view here is that a Liouville manifold is a symplectic manifold carrying the additional data of a deformation equivalence class of Liouville structures. For practical purposes we view $\hat W$ as the union $W\, \cup \, [1,\infty)\times\p W$, where $W$ is a Liouville domain and $[1,\infty)\times \p W$ is the positive half of the symplectization of its contact boundary $\p W$. 
A choice of Liouville structure determines a unique such presentation up to deformation, and so does the choice of a deformation equivalence class of Liouville structures. 

In order to place Definition~\ref{defi:uniruled} into context, let us recall that a complex manifold $(X,J)$ is said to be \emph{Brody hyperbolic} if any holomorphic map from the complex plane into $X$ is constant. The most famous example of a Brody hyperbolic complex manifold is $\C P^1$ minus three points, this being the content of the Little Picard Theorem. The classical reference on complex hyperbolic manifolds is~\cite{Lang-hyperbolicity}. Let us call a complex manifold \emph{uniruled} if any point sits on the image of some non-constant map defined on the complex plane. Uniruled complex manifolds are  the opposite of hyperbolic ones as they feature an abundance of non-constant holomorphic planes. 
The definition of uniruledness in the almost complex setting should be seen as a symplectic generalization of complex uniruledness, in the spirit of Biolley's definition of symplectic (non-)hyperbolicity.

\begin{theorem} \label{thm:uniruled}
Let $\hat W$ be a Liouville manifold of finite type and assume that there exists a local system of coefficients $\cL$ on the constant loop component of the free loop space $L_0\hat W$ such that
\begin{enumerate}
\item symplectic homology of $\hat W$ in the component of constant loops with coefficients in $\mathcal L$ vanishes, and 
\item the cohomology of $\hat W$ with coefficients in $\cL|_{\hat W}$ does not vanish. 
\end{enumerate}
Then $\hat W$ is uniruled. 
\end{theorem}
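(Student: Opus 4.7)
The plan is to argue by contradiction along the lines of Biolley~\cite{Biolley-thesis,Biolley-arxiv}, combining a PSS construction with a symplectic field theory neck-stretching argument. Fix a point $x\in\hat W$ and a compatible cylindrical almost complex structure $J$, and assume that no non-constant finite-energy $J$-holomorphic plane through $x$ exists; the goal is to contradict the two hypotheses.

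First, I would choose a Liouville presentation $\hat W=W\cup[1,\infty)\times\partial W$ with $x\in\INT W$, and pick a Hamiltonian $H$ which is a $C^2$-small Morse function on $W$ having a unique non-degenerate minimum at $x$, and which equals $\tau(r-1)$ on the cylindrical end for a small positive slope $\tau$ outside the action spectrum of $\partial W$. For such $H$ the low-action part of the Floer complex $CF^0_*(H;\cL)$ is identified by the usual PSS construction with the Morse complex of $H$ twisted by $\cL|_{\hat W}\otimes o_{\hat W}$, and hence computes $H_*(\hat W;\cL|_{\hat W}\otimes o_{\hat W})$. Via Poincar\'e--Lefschetz duality with local coefficients, hypothesis~(2) translates into the non-vanishing of this homology; choose a Floer cycle $\xi$ representing a nonzero class, with the generator at the minimum $x$ in its support (this can be arranged by a suitable choice of representative, using the non-degeneracy of $x$).

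Next I would invoke hypothesis~(1): the continuation map from $HF^0_*(H;\cL)$ to $SH^0_*(\hat W;\cL)=0$ is zero, so, after passing to a sufficiently large slope $\tau'$, there exists a Floer chain $\eta\in CF^0_{*+1}(H_{\tau'};\cL)$ whose Floer boundary is the continuation image of $\xi$. The occurrence of the generator at $x$ in $\partial\eta$ is the $\cL$-weighted count of a nonempty moduli space $\cM_x$ of Floer continuation solutions whose output asymptote is the constant orbit at $x$, and this count is nonzero. Fix a small Darboux ball $B=B_\epsilon(x)$ on which $J$ is standard, and stretch the neck along $\partial B$. As the neck length $R\to\infty$, SFT compactness forces a sequence of solutions in $\cM_x$ to degenerate into a broken configuration consisting of a Floer building outside $B$, cylinders in the symplectization $\R\times\partial B$, and a collection of finite-energy $J$-holomorphic planes inside $B$. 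The latter must be nonempty, since the limiting configuration still carries the point $x$ and $x$ lies on no Reeb orbit of $\partial B$; at least one such plane is non-constant. Finally, letting $\epsilon\downarrow 0$ and invoking Gromov compactness for the unstretched $J$ would extract a non-constant finite-energy $J$-holomorphic plane in $\hat W$ through $x$, contradicting the standing assumption.

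I expect the main obstacle to be the careful bookkeeping of the local system $\cL$ throughout the SFT compactness and gluing argument: one must ensure that the $\cL$-weighted count on $\cM_x$ genuinely does not vanish, and that the SFT compactness and gluing theorems remain valid for Floer moduli spaces carrying local coefficients. A secondary subtlety, addressed by the explicit construction of $\xi$ with support at $x$ in the first step, is to prevent the limiting broken configuration from concentrating entirely on the cylindrical end of $\hat W$ and thereby bypassing a neighborhood of the point $x$.
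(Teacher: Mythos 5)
You set up the first half essentially as the paper does: a twisted PSS identification of the low-slope Floer complex with the Morse complex, and the vanishing of $SH^0_\cdot(\hat W;\cL)$ forcing the relevant class to die, whence a Floer cylinder from a nonconstant $1$-periodic orbit to the constant orbit at $x$. The gap comes afterwards, in the neck-stretching step, which you perform along the boundary of a small Darboux ball $B_\eps(x)$. In the fully stretched limit the planes you produce live in the completion of $B$ (standard $\C^n$), while the components outside live in $\hat W\setminus B$ completed by a negative cylindrical end over $\p B$; neither is a finite-energy $J$-holomorphic plane in $(\hat W,J)$ through $x$, which is what uniruledness demands. Your proposed repair, ``let $\eps\downarrow 0$ and invoke Gromov compactness for the unstretched $J$'', has no sequence of $J$-holomorphic curves to which compactness could apply: for each fixed $\eps$ the curves are holomorphic only for the stretched structures (which differ from $J$ near $x$), and the energy of the components inside $B_\eps$ tends to $0$ with $\eps$, so any limit would be constant. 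Requiring $J$ to be standard on a Darboux ball is also not permitted, since uniruledness quantifies over \emph{all} cylindrical $J$. The paper instead stretches in a small collar of $\p W$ while simultaneously letting the Hamiltonian tend to zero on $W$ (as in the Bourgeois--Oancea framework): because $J$ is already cylindrical on the end, inserting the neck there does not change $(\hat W,J)$ up to biholomorphism, and the bottom level of the SFT limit building is $(\hat W,J)$ itself, containing a nonconstant $J$-plane through $x$ asymptotic to a closed Reeb orbit at infinity, with Hofer energy bounded by the slope threshold $\mu$.

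A second, smaller gap is your treatment of hypothesis (2) in positive degrees: a cycle representing a nonzero class corresponding to $H^k(\hat W;\cL|_{\hat W})$ with $k>0$ lives in a single Morse degree different from that of the minimum, so it cannot be arranged to ``contain the generator at $x$''; nondegeneracy of $x$ does not help with grading. The paper handles $k>0$ by running the same argument with a critical point of index $k$ of $H$ as the target of the Floer trajectories, rather than the minimum.
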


\noindent {\bf Remark.} This result is encompassed by Biolley's~\cite[Theorem~5.1]{Biolley-arxiv}, but we give an alternative proof based on the compactness theorem in symplectic field theory~\cite{BEHWZ}. Indeed, Biolley's theorem involves a condition of controlled capacity growth which is automatically satisfied for Liouville manifolds of finite type, and although its statement involves symplectic homology with untwisted coefficients the proof adapts \emph{verbatim} to the case of twisted coefficients. In particular, Biolley's arguments yield the following, still under the assumptions of the theorem: \emph{given any tame almost complex structure $J$ on $W$, every point sits on the image of a $J$-holomorphic disc in $W$ with boundary on $\p W$}.

\begin{proof}[Proof of Theorem \ref{thm:uniruled}] Assume first that $H^0(\hat W;\cL|_{\hat W})\neq 0$. Pick any point $x\in \hat W$ and a presentation $\hat W=W\, \cup\, [1,\infty)\times\p W$ such that $x\in W$. Let $H:S^1\times \hat W\to\R$ be a periodic time-dependent Hamiltonian with nondegenerate $1$-periodic orbits which satisfies the following conditions: (i) the restriction of $H$ to $W$ is a (time-independent) $C^2$-small Morse function which has a unique minimum at $x$ and whose gradient is outward pointing along $\p W$; (ii) in the cylindrical end $[1,\infty)\times \p W$ the Hamiltonian is a small perturbation of a time-independent convex function of the radial coordinate $r\in[1,\infty)$, which is strictly increasing and becomes linear outside some compact set. (We call such a Hamiltonian \emph{admissible} for $W$.) Note that the unique minimum $x$ represents in Morse homology the fundamental class of the manifold in $H_{2n}(W,\p W;\cL|_W)\simeq H^0(W;\cL|_W)$. Since the twisted symplectic homology vanishes, there exists $\mu>0$ such that, if the maximal slope of the Hamiltonian $H$ is bigger than $\mu$, then the image of the fundamental class $[x]$ in the Floer homology of $H$ is zero. On the other hand, we assumed that this fundamental class is nonzero, which implies that there exists a Floer trajectory from some non-constant $1$-periodic orbit of $H$ to the minimum $x$. 

We now stretch the neck in a small collar neighborhood of $\p W$ while at the same time letting the Hamiltonian go to zero on $W$, similarly to~\cite{Bourgeois_Oancea_sequence}. The outcome of the analysis therein and of the Symplectic Field Theory compactness theorem~\cite{BEHWZ} is that there is a sequence of Floer trajectories as above which converges to a building that solves a Hamiltonian perturbed Cauchy-Riemann equation on the top level, and that contains in the bottom level a $J$-holomorphic plane passing through $x$. 

Note that, as a byproduct of the method, the $J$-holomorphic plane that we find is asymptotic to a closed Reeb orbit at infinity, and it has Hofer energy smaller than $\mu$. 

In the general case where we assume the existence of a nonzero cohomology class in some $H^k(\hat W;\cL|_{\hat W})$ for $k>1$ the proof is similar, except for notational changes that arise from the choice of a sequence of Floer trajectories from some non-constant $1$-periodic orbit of $H$ to one of the critical points of index $k$ of $H$.  
\end{proof}

\noindent {\bf Remark.} Work in progress by Jungsoo Kang and Richard Siefring aims to produce some remarkable finite energy foliations in the $4$-dimensional case using a more refined asymptotic analysis, under the same assumptions. 

\noindent {\bf Remark.} Assume there is a local system $\cL$ as in Theorem~\ref{thm:uniruled} such that $\cL|_{\hat W}$ is trivial, so that we have $H^0(\hat W;\cL|_W)\neq 0$. Given a presentation $\hat W=W\cup[1,\infty)\times\p W$ of the Liouville domain $W$, we define $SH_\cdot^{(-\infty,\mu),0}(W;\cL)$, $\mu>0$ as the direct limit of Floer homology groups $FH_\cdot^{(-\infty,\mu)}(H;\cL)$ truncated in the action window $(-\infty,\mu)$ in the component of constant loops, the limit being taken over increasing families of admissible Hamiltonians for $W$ as above. We then have $H_{\cdot+n}(W,\p W)\simeq SH_\cdot^{(-\infty,\mu),0}(W;\cL)$ for any $\mu>0$ smaller than the minimal period of a closed Reeb orbit on $\p W$. Denote $SH_\cdot^0(\hat W;\cL)$ the symplectic homology of $\hat W$ in the component of constant loops with coefficients in $\cL$, which we assume to vanish. In view of $\lim_{\mu\to\infty}ÊSH_\cdot^{(-\infty,\mu),0}(W;\cL)=SH_\cdot^0(\hat W;\cL)=0$, we infer that 
$$
\mu(W,\cL):=\inf \big\{\mu\mid\mbox{the map }ÊH_{2n}(W,\p W)\to SH_\cdot^{(-\infty,\mu),0}(W;\cL) \mbox{ vanishes}\big\}<\infty.
$$
Viterbo~\cite{Viterbo99} and Biolley~\cite{Biolley-thesis} call $\mu(W,\cL)$ \emph{the capacity of the Liouville domain $W$}. Since $SH_\cdot^{(-\infty,\mu),0}(W;\cL)$ only changes if $\mu$ crosses a period of a closed Reeb orbit on $\partial W$ the capacity $\mu(W,\cL)$ is a period of a closed Reeb orbit. As a matter of fact $\mu(W,\cL)$ is a period of a \emph{contractible} Reeb orbit since the vanishing of the map $H_{2n}(W,\p W)\to SH_\cdot^{(-\infty,\mu),0}(W;\cL)$ means that there is a Floer trajectory from this Reeb orbit to a constant loop, see the proof of Theorem \ref{thm:uniruled}.

\noindent {\bf Remark.} The \emph{Weinstein conjecture}~\cite{Weinstein_The_conjecture} asserts the existence of a periodic orbit for any Reeb vector field on a closed contact manifold. Viterbo proved in \cite{Viterbo99} that vanishing of symplectic homology of a Liouville domain $W$ implies the Weinstein conjecture for separating contact hypersurfaces in $W$. This continues to hold with twisted coefficients that are constant on the subspace of constant loops. Indeed, for the remarkable hypersurface $\p W$ this follows from $\mu(W,\cL)<\infty$. For general hypersurfaces, this follows from Viterbo functoriality \emph{\`a la} Ritter~\cite{Ritter09}. Note that the Reeb orbits that we find in this way are contractible, though there are contact manifolds which are hypertight, meaning that no Reeb vector field has a contractible Reeb orbit. This phenomenon is related to symplectic fillability, see~\cite{MNW13} and the references therein. This stronger conclusion should be compared with Corollary \ref{cor:finite_HZ_cap} which also asserts the existence of a \emph{contractible} orbit of a Hamiltonian system.

The proof of Theorem~\ref{thm:uniruled} shows in particular the following. 

\begin{corollary} Assume that the local system $\cL$ in the statement of Theorem~\ref{thm:uniruled} is trivial on the subspace of constant loops $\hat W$. Given a decomposition $\hat W=W\, \cup\,  [1,\infty)\times \p W$ the uniruling $J$-holomorphic planes can be chosen such that their Hofer energy is bounded by $\mu(W,\cL)$. 
\hfill{$\square$}
\end{corollary}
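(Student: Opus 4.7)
The plan is to leverage the proof of Theorem~\ref{thm:uniruled} in a quantitative form and then pass to the limit $\mu\searrow\mu(W,\cL)$ using SFT compactness. Fix a point $x\in W$ and inspect the proof of Theorem~\ref{thm:uniruled}. Given any $\mu>\mu(W,\cL)$, the definition of $\mu(W,\cL)$ guarantees that the fundamental class of $H_{2n}(W,\p W)\simeq H^0(W;\cL|_W)$ is killed already in $FH_\cdot^{(-\infty,\mu)}(H;\cL)$ for some admissible Hamiltonian $H$ with slope larger than $\mu$. This means there is a Floer trajectory inside the action window $(-\infty,\mu)$ connecting some non-constant $1$-periodic orbit of $H$ to the minimum at $x$. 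Applying the neck-stretching procedure of~\cite{Bourgeois_Oancea_sequence} while simultaneously shrinking $H$ to zero on $W$, the SFT compactness theorem~\cite{BEHWZ} produces a pseudo-holomorphic building containing, in its bottom level, a non-constant $J$-holomorphic plane $u_\mu$ through $x$. The Hofer energy of this plane is controlled by the action of the top asymptotic, hence $E(u_\mu)\le\mu$.

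Now choose a decreasing sequence $\mu_n\searrow\mu(W,\cL)$ and, for each $n$, let $u_n$ denote the resulting $J$-holomorphic plane through $x$ with $E(u_n)\le\mu_n$. The sequence $\{u_n\}$ has uniformly bounded Hofer energy and passes through the fixed compact set $\{x\}\subset\hat W$. A second application of the SFT compactness theorem~\cite{BEHWZ} extracts a subsequential limit consisting of a pseudo-holomorphic building $u_\infty$ in the cylindrical almost complex manifold $(\hat W,J)$, whose total Hofer energy satisfies $E(u_\infty)\le\mu(W,\cL)$. The marked point $x$ lies on some irreducible component of $u_\infty$; following the nodal tree from that component one reaches a non-constant irreducible component through $x$, which is necessarily a finite energy plane (exactness of the Liouville structure prevents non-constant closed $J$-holomorphic components, and the only positive puncture of the original curves persists as the unique puncture of a plane in the top level of the limit). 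This plane is the desired uniruling curve through $x$ and its Hofer energy is bounded above by $\mu(W,\cL)$.

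The main obstacle is the identification of a non-constant planar component in the limit building that actually passes through $x$; this amounts to ruling out the degenerate case in which $x$ sits only on ghost (constant) components, and to verifying that the non-constant component reached through the nodal tree is itself a plane rather than a cylinder or multiply-punctured curve. Exactness of the Liouville form, combined with the fact that each $u_n$ is a plane with a single positive puncture, provides both ingredients: no closed component carries positive symplectic area, and the number of positive punctures is preserved in the SFT limit. The remaining bookkeeping on Hofer energies is routine, since Hofer energy is additive over the components of a building and lower semicontinuous under SFT convergence.
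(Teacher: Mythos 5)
Your first paragraph is exactly the paper's (implicit) argument: the corollary is stated as a direct byproduct of the proof of Theorem~\ref{thm:uniruled}, where the neck-stretching is run for a Hamiltonian whose Floer homology is truncated in the action window $(-\infty,\mu)$, and the resulting plane through $x$ is asymptotic to a closed Reeb orbit and has Hofer energy smaller than $\mu$. The paper then concludes without any further limit: since the truncated groups $SH_\cdot^{(-\infty,\mu),0}(W;\cL)$, and hence the vanishing of the map from $H_{2n}(W,\p W)$, only change when $\mu$ crosses a period of a closed Reeb orbit, the bound can be taken at the spectrum value $\mu(W,\cL)$ itself (which the paper identifies as the period of a contractible Reeb orbit). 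Your proposal instead interposes a second SFT-compactness step, letting $\mu_n\searrow\mu(W,\cL)$ and taking a limit of the planes $u_n$.

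That second step contains a genuine gap. For a sequence of finite energy planes, SFT compactness produces a building whose \emph{total} configuration has a single free positive puncture; it does not follow that the main-level component through $x$ is a plane. That component may perfectly well carry two or more positive punctures, matched to negative ends of pair-of-pants-type components in the symplectization levels above (a genus zero building with one free positive end allows this), and exactness of the Liouville structure only excludes non-constant \emph{closed} components — it says nothing about multiply punctured ones. So your assertion that ``the number of positive punctures is preserved'' and that the non-constant component through $x$ ``is necessarily a finite energy plane'' is unjustified, and with it the conclusion of the second limit. The gap is avoidable, and the paper avoids it, by never taking a limit of planes: for every $\mu>\mu(W,\cL)$ the first step already yields a plane whose asymptotic Reeb period lies in the action spectrum and is smaller than $\mu$, and the locally constant behaviour of the truncated homology between spectrum values upgrades the bound to $\mu(W,\cL)$ directly. (Note also that your second compactness argument would in any case require nondegenerate or Morse--Bott Reeb dynamics to invoke~\cite{BEHWZ}, a hypothesis you do not address.)
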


This echoes a result of Biolley~\cite[Theorem~3.1]{Biolley-arxiv} for pseudo-holomorphic discs, with the Hofer energy replaced by the $L^2$-energy/symplectic area.

Putting together Theorems~\ref{thm:uniruled} and~\ref{thm:vanishing} we obtain:

\begin{theorem}
Let $M$ be a closed manifold satisfying condition~(C) in the Introduction. Then $T^*M$ is uniruled. 
\end{theorem}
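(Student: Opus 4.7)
The plan is simply to combine Theorem~\ref{thm:vanishing} and Theorem~\ref{thm:uniruled}, since $T^*M$ is a Liouville manifold of finite type (obtained from the unit disk bundle $D^*M$ by attaching its positive symplectization). The only thing to verify is that a suitable local system satisfies both hypotheses (1) and (2) of Theorem~\ref{thm:uniruled} simultaneously.

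First I would choose $\cC$ to be the trivial rank one local system on $M$ with fiber $\C$. Since $M$ satisfies condition~(C), Theorem~\ref{thm:vanishing} produces a local system $\cL$ on $L_0T^*M$ with the properties that $\cL|_M\simeq \cC$ and $SH_\cdot^0(T^*M;\cL)=0$. This immediately gives hypothesis (1) of Theorem~\ref{thm:uniruled}.

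For hypothesis (2), the key observation is that $T^*M$, viewed as the subspace of constant loops inside $L_0T^*M$, deformation retracts onto the zero section $M$ via the obvious homotopy scaling fibers to zero. Consequently, the restriction $\cL|_{T^*M}$ is isomorphic to the pull-back of $\cL|_M\simeq \cC$ under this retraction, and hence is itself a trivial rank one local system with fiber $\C$. Therefore
\[
H^0(T^*M;\cL|_{T^*M})\cong H^0(T^*M;\C)\cong \C\neq 0,
\]
since $T^*M$ is path-connected. This verifies hypothesis (2).

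Both hypotheses of Theorem~\ref{thm:uniruled} being satisfied, we conclude that $T^*M$ is uniruled. There is no serious obstacle, since all the analytic content—the vanishing of twisted symplectic homology and the neck-stretching/SFT compactness argument producing the pseudoholomorphic plane—has been carried out in the two theorems already assembled above; the statement is really just an application, made explicit for convenience of reference (and in fact, by the corollary following Theorem~\ref{thm:uniruled}, the uniruling $J$-holomorphic planes may be chosen with Hofer energy bounded by the capacity $\mu(D^*M,\cL)$).
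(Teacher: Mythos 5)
Your proposal is correct and matches the paper's own (very short) proof: choose the trivial local system $\cC$ on $M$, apply Theorem~\ref{thm:vanishing} to get vanishing twisted symplectic homology, and note that triviality of $\cL$ on the constant loops gives $H^0(T^*M;\cL|_{T^*M})\cong\C\neq 0$, so Theorem~\ref{thm:uniruled} applies. The extra details you supply (the retraction of $T^*M$ onto the zero section and the energy bound from the corollary) are consistent with the paper and do not change the argument.
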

 
\begin{proof} Indeed, it is enough to choose in Theorem~\ref{thm:vanishing}  a local system which is trivial on the space of constant loops, so that Theorem~\ref{thm:uniruled} can be applied. 
\end{proof}

%%%%%%%%%%%%%%%%%%%%%%%%%%%%%%%%%%%%%%%%%%%
%%%%%%%%%%%%%%%%%%%%%%%%%%%%%%%%%%%%%%%%%%%
%%%%%%%%%%%%%%%%%%%%%%%%%%%%%%%%%%%%%%%%%%%
%%%%%%%%%%%%  Section 3  %%%%%%%%%%%%%%%%%%%%%%%%%
%%%%%%%%%%%%%%%%%%%%%%%%%%%%%%%%%%%%%%%%%%%
%%%%%%%%%%%%%%%%%%%%%%%%%%%%%%%%%%%%%%%%%%%

\subsection{Finiteness of the Hofer-Zehnder capacity} \label{sec:HZ}

Let $W$ be a connected Liouville domain of dimension $2n$.
%
%To simplify the discussion we assume further that the first Chern class $c_1(W)$ vanishes,
%\marginpar{\tiny Rephrase to get rid of this, or at least assume only $2c_1=0$} 
%so that the symplectic homology of $W$ has a $\mathbb{Z}$-grading. 
%
Abbreviate
$$\mathscr{S}:=\mathscr{S}(W):=\bigcup_{\ell\,\,\textrm{prime}}\Hom_{inv}(\pi_1(\Omega_0W),\Z/\ell)$$
which we interpret as local systems on the free loop space of $W$ with fiber $\C$ and holonomy in $\Z/\ell$, and which restrict to trivial local systems on
$W$ itself. We consider the subset
$$\mathscr{S}_0:=\mathscr{S}_0(W):=\{\mathcal{L} \in \mathscr{S}: SH_\cdot^0(W;\mathcal{L})=0\}$$
where $SH_\cdot^0(W;\mathcal{L})$ denotes symplectic homology of the Liouville domain in the component of the constant loops with coefficients twisted
by $\mathcal{L}$. The grading on $SH_\cdot^0$ is only $\Z/2$-valued unless the first Chern class $c_1(W)$ is $2$-torsion, in which case it is $\Z$-valued.  Our aim is to explain that under the assumption that $\mathscr{S}_0 \neq \emptyset$ the $\pi_1$-sensitive
Hofer-Zehnder capacity of $W$ is finite. Examples where $\mathscr{S}_0 \neq \emptyset$ are:
\begin{enumerate}\renewcommand{\theenumi}{\roman{enumi}}\itemsep=1ex
 \item Subcritical Stein domains~\cite{Cieliebak} or more generally Liouville domains which admit a displaceable embedding~\cite{Ritter,Kang14}. In this case $\mathscr{S}_0=\mathscr{S}$ so that already the trivial local system lies in $\mathscr{S}_0$.
 \item Flexible Weinstein domains, see~\cite[\S11.8, \S17.2]{Cieliebak-Eliashberg}.
 \item Fiberwise starshaped domains $W \subset T^*M$ in the cotangent bundle of a closed manifold which satisfies condition~(C) in the Introduction.
 In this case the set $\mathscr{S}_0$ is nonempty and consists of $\mathscr{S}$ minus one element. This is the content of Theorem~\ref{thm:vanishing}.
\end{enumerate} 
We recall that if $SH_\cdot^0(W;\cL)=0$ the capacity of the Liouville domain $W$ with respect to the local system $\cL$ is denoted by  $\mu(W,\cL)$.  To get a quantity which only depends on $W$ we set
$$\mu(W):=\inf\{\mu(W,\mathcal{L}): \mathcal{L} \in \mathscr{S}_0(W)\}.$$
If $\mathscr{S}_0(W)=\emptyset$ we set $\mu(W):=\infty$.

We next describe two Hofer-Zehnder capacities on $W$. We need the following terminology. A smooth function
$H \colon W \to [0,\infty)$ with compact support in the interior of $W$ is called \emph{simple} if it satisfies the following two conditions:
\begin{enumerate}\renewcommand{\theenumi}{\roman{enumi}}
 \item There exists a nonempty open subset $U \subset W$ such that $H|_{U}=\max H$,
 \item The only critical values of $H$ are $0$ and $\max H$.
\end{enumerate}
In particular, since $H$ is nonnegative it follows that the oscillation of $H$ equals
$$\mathrm{osc}(H):=\max H-\min H=\max H.$$
A simple function $H$ is called HZ-\emph{admissible} if the Hamiltonian vector field of $H$ has no nonconstant
periodic orbits of period less than or equal to $1$, and a simple function is called $\textrm{HZ}^0$-\emph{admissible} if the same is true for contractible periodic orbits. The two Hofer-Zehnder capacities of the Liouville domain $W$ are defined as
$$c_{HZ}(W):=\sup\{\max H: H\,\,\textrm{HZ-admissible}\},$$
$$c^0_{HZ}(W):=\sup\{\max H: H\,\,\textrm{HZ}^0\textrm{-admissible}\}.$$
We have the following obvious inequality
$$c_{HZ}(W) \leq c^0_{HZ}(W).$$

As a motivation, let us recall that finiteness of the Hofer-Zehnder capacities has the following striking consequence.
%, see \cite[Chapter 4, Theorem 4]{Hofer_Zehnder_Book} for a proof. 

%To formulate it, recall that a hypersurface $S\subset \mathrm{int}(W)$ is called \emph{separating}Êif $\mathrm{int}(W)\setminus S$ consists of two components, one of which is relatively compact in $\mathrm{int}(W)$. Also, given a separating hypersurface $S\subset \mathrm{int}(W)$, a \emph{family $(S_\epsilon)$ of hypersurfaces modeled on $S$}Êconsists of the images of the slices $\{\epsilon\}\times S$ for $\epsilon$ belonging to a neighborhood of $0$ through an embedding $\psi:I\times S\hookrightarrow \mathrm{int}(W)$ such that $\psi(0,\cdot)=\mathrm{Id}_S$. 

\begin{theorem}[almost existence theorem {\cite[Chapter 4, Theorem 4]{Hofer_Zehnder_Book}}] \label{thm:almost_existence}
Let $H \colon W \to \mathbb{R}$ be such that $0$ is a regular value and $H|_{\partial W}>0$. Given an interval $I$ containing $0$ and consisting of regular values, the following hold:
 
If $c_{HZ}(W)<\infty$, then almost every level set of $H$ in the interval $I$ carries a periodic orbit.

If $c_{HZ}^0(W)<\infty$, then almost every level set of $H$ in the interval $I$ carries a periodic orbit which is contractible in $W$. \hfill{$\square$}
\end{theorem}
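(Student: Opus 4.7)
This is the classical almost-existence theorem of Hofer and Zehnder, and I would prove it by contradiction. Assume $c_{HZ}(W)<\infty$ (respectively $c^0_{HZ}(W)<\infty$), and suppose the set $\Sigma\subset I$ of regular values $c\in I$ such that $H^{-1}(c)$ carries no periodic orbit (respectively no contractible periodic orbit) has positive Lebesgue measure. The plan is to construct, for any prescribed $M>c_{HZ}(W)$ (respectively $M>c^0_{HZ}(W)$), a simple HZ-admissible (respectively $\mathrm{HZ}^0$-admissible) Hamiltonian $F$ on $W$ with $\max F=M$, contradicting the very definition of the capacity. This forces $|\Sigma|=0$, which is the conclusion.

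\textbf{Reparametrization.} The basic construction is a reparametrization: for any smooth $\rho\colon\mathbb R\to[0,\infty)$, set $F:=\rho\circ H$. Its Hamiltonian vector field is $X_F=\rho'(H)X_H$, so $X_F$ preserves the level sets of $H$ and, on $\{H=c\}$, coincides with $X_H$ multiplied by the constant $\rho'(c)$. Consequently a non-constant $1$-periodic orbit of $X_F$ sitting on $\{H=c\}$ is precisely a non-constant $|\rho'(c)|$-periodic orbit of $X_H$ on that level, in the same free homotopy class of loops in $W$.

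\textbf{Construction of $\rho$.} Choose a compact sub-interval $J=[a,b]\subset I$ with $b<\min_{\partial W}H$, so that $\{H\in J\}\Subset\INT W$. Let $T_0>0$ be a uniform lower bound for the minimal period of non-constant closed orbits of $X_H$ meeting the compact set $\{H\in J\}$; such a bound exists by a standard Gr\"onwall estimate applied to the locally Lipschitz vector field $X_H$. Using the hypothesis $|\Sigma|>0$, build a smooth bump-like function $\rho\colon\mathbb R\to[0,M]$ with support in $J$, attaining value $M$ on a non-empty open subset of $J$, and satisfying the pointwise slope bound $|\rho'(c)|<T_0$ for every $c\in J\setminus\Sigma$. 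Such a $\rho$ exists with $M$ arbitrarily large: one concentrates the variation of $\rho$ on a compact positive-measure subset $C\subset\Sigma$ where $|\rho'|$ is unconstrained, via a mollification of a step-like function. The main technical obstacle lies exactly here, since $\Sigma$ is only a measurable set and may be nowhere dense. One resolves it by choosing a Lebesgue density point of $\Sigma$, which allows working in arbitrarily short sub-intervals where $\Sigma$ has density close to $1$, thereby separating the scale on which $|\rho'|\ge T_0$ is permitted (confined to $\Sigma$) from the scale on which $|\rho'|<T_0$ must hold (outside $\Sigma$).

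\textbf{Verification.} With $\rho$ as above, $F=\rho\circ H$ is smooth, non-negative, and compactly supported in $\INT W$, since $\mathrm{supp}(F)\subset\{H\in J\}$. Its only critical values are $\{0,M\}$, since $\rho$ has no critical values besides $0$ and $M$ and $J\subset I$ contains no critical value of $H$. It attains its maximum $M$ on the open set $H^{-1}(\rho^{-1}(M))$. Finally, any non-constant $1$-periodic orbit of $X_F$ would lie on some level $\{H=c\}$ with $c\in J$ and translate into a non-constant $|\rho'(c)|$-periodic orbit of $X_H$; this is excluded for $c\in\Sigma$ by the definition of $\Sigma$, and for $c\in J\setminus\Sigma$ by the bound $|\rho'(c)|<T_0$. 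Hence $F$ is HZ-admissible with $\max F=M>c_{HZ}(W)$, the desired contradiction. Because the reparametrization preserves free homotopy classes of periodic orbits, the same argument with ``periodic orbit'' replaced by ``contractible periodic orbit'' throughout proves the $\pi_1$-sensitive statement.
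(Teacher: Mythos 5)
There is a genuine gap, and it sits exactly where you flagged the ``main technical obstacle'': the function $\rho$ you need does not exist in general. Your admissibility requirement is \emph{pointwise}: $|\rho'(c)|<T_0$ for every $c\in J\setminus\Sigma$, with no constraint only on $\Sigma$. But $\rho'$ is continuous, and the set of levels that \emph{do} carry (contractible) periodic orbits may very well be dense in $J$ while $\Sigma$ is a fat Cantor-type set of positive measure with empty interior. If $J\setminus\Sigma$ is dense, continuity forces $|\rho'|\le T_0$ on all of $J$, hence $\max F\le T_0\,|J|$, a fixed bound --- you can never reach an arbitrarily large $M$. Passing to a Lebesgue density point of $\Sigma$ does not help: the obstruction is topological (density of the complement), not metric, and high density of $\Sigma$ in a short interval is compatible with $J\setminus\Sigma$ still being dense there. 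What your reparametrization argument genuinely proves is only that $\Sigma$ contains no interval, i.e.\ that levels carrying (contractible) periodic orbits are \emph{dense} in $I$; that is strictly weaker than the almost existence statement, and the gap between ``dense'' and ``almost every'' is precisely the content of the theorem.

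For comparison: the paper does not prove this statement at all --- it quotes it from Hofer--Zehnder's book --- and the proof there is structured differently. One considers the monotone function $s\mapsto c(s):=c_{HZ}$ (or $c^0_{HZ}$) of the nested sublevel-type domains $\{H\le s\}$, which by monotonicity has bounded difference quotients at almost every $s\in I$ (Lebesgue). At such an $s$ one constructs admissible Hamiltonians squeezed between two nearby levels whose oscillation slightly exceeds the capacity difference $c(s+\eps)-c(s-\eps)$; admissibility fails, so one obtains nonconstant $1$-periodic orbits on \emph{intermediate} levels, whose periods (after reparametrizing to orbits of $X_H$) are controlled by the difference quotient. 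An Arzel\`a--Ascoli limit as $\eps\to 0$, together with a lower bound guaranteeing the limit is nonconstant, then produces a periodic orbit on the level $\{H=s\}$ itself. Your proposal is missing this limiting mechanism, which is what converts information about nearby levels into an orbit on a prescribed level and thereby upgrades ``dense'' to ``almost every''.
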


The following theorem is due to Irie~\cite[Corollary 3.5]{Irie-HZ}. The statement and the proof in~\cite{Irie-HZ} assume constant coefficients, but they carry over \emph{verbatim} to the case of twisted coefficients. 

\begin{theorem}\label{thm:Irie}
The following inequality holds
$$c_{HZ}^0(W) \leq \mu(W).$$
\hfill{$\square$}
\end{theorem}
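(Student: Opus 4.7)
The strategy follows Irie's argument in \cite[Corollary~3.5]{Irie-HZ}, adapted to twisted coefficients. I argue by contradiction: suppose there exists an HZ$^0$-admissible $H\colon W\to[0,a]$ with $a:=\max H>\mu(W)$, and fix $\cL\in\mathscr{S}_0(W)$ with $a>\mu(W,\cL)$. Pick $\mu\in(\mu(W,\cL),a)$. By the definition of $\mu(W,\cL)$ in the remark above, the image of the fundamental class $[W,\p W]\in H_{2n}(W,\p W)$ in the truncated symplectic homology $SH_\cdot^{(-\infty,\mu),0}(W;\cL)$ vanishes. The plan is to convert this Floer-theoretic vanishing into the existence of a non-constant contractible $1$-periodic orbit of $X_H$ in $W$, contradicting HZ$^0$-admissibility of $H$.

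To realize this, for large $\tau>0$ I construct an admissible Hamiltonian $K_\tau\colon\hat W\to\R$ of the form $K_\tau=f_\tau\circ H$ on $W$, extended to the cylindrical end $[1,\infty)\times\p W$ as a convex function of the radial coordinate with slope $s_\tau\to\infty$ avoiding the action spectrum of contractible Reeb orbits on $\p W$. The profile $f_\tau\colon[0,a]\to\R$ is taken concave and increasing with $f_\tau'\le 1$ on $[0,a]$. The identity $X_{f(H)}=f'(H)\,X_H$ then shows that any non-constant $1$-periodic orbit of $X_{K_\tau}$ on a level $\{H=c\}\subset W$ is a periodic orbit of $X_H$ of period $f_\tau'(c)\in(0,1]$. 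By HZ$^0$-admissibility of $H$, no such contractible orbit exists; hence the only $1$-periodic orbits of $K_\tau$ in $\hat W$ are constant orbits at critical points of $H$ (perturbed to a nondegenerate Morse configuration) and orbits in the cylindrical end corresponding to contractible Reeb orbits on $\p W$.

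A continuation argument, using the vanishing $SH_\cdot^0(W;\cL)=0$ together with the truncation at action $\mu$, produces a Floer primitive of the fundamental class supported on $1$-periodic orbits of $K_\tau$ of action less than $\mu$. A careful action-energy analysis, identical to the one in \cite[\S3]{Irie-HZ}, then shows that orbits in the cylindrical end alone cannot bound the fundamental class, essentially because the spectral number $\mu(W,\cL)$ itself realizes the shortest cylindrical-only primitive at action exactly $\mu(W,\cL)<\mu$. Hence the primitive must include a non-constant $1$-periodic orbit of $K_\tau$ in $W$, contradicting the previous step. This proves $a\le\mu(W,\cL)$; taking the infimum over $\cL\in\mathscr{S}_0(W)$ yields $c_{HZ}^0(W)\le\mu(W)$. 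The main difficulty lies in the action-energy analysis ruling out a purely cylindrical Floer primitive, which is Irie's ``sandwich'' construction; it transfers verbatim to the twisted setting because $\cL$ restricts trivially to the space of constant loops $W$, so all continuation maps, action filtrations and energy identities remain well-defined and $\cL$-linear.
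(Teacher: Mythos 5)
Your high-level route is the same as the paper's: the paper offers no independent argument for this theorem, it invokes Irie's \cite[Corollary~3.5]{Irie-HZ} and records that the statement and proof carry over verbatim to twisted coefficients, and your justification for that transfer --- that every $\mathcal{L}\in\mathscr{S}_0(W)$ restricts to the trivial local system on the subspace of constant loops, so the low-action/constant-orbit part of the theory and all continuation maps remain $\mathcal{L}$-linear --- is exactly the right one. Had you stopped at "Irie's proof applies with twisted coefficients because $\mathcal{L}|_W$ is trivial", you would have reproduced the paper's proof.

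The difficulty is with the sketch of Irie's argument that you volunteer, which has a genuine gap at its crux. The decisive step is to show that the vanishing of the fundamental class in $SH_\cdot^{(-\infty,\mu),0}(W;\cL)$ with $\mu<a=\max H$ is incompatible with HZ$^0$-admissibility of $H$, and for this the oscillation $a$ must enter the action filtration quantitatively. With your profile ($f_\tau$ increasing, $f_\tau'\le 1$, so $0\le f_\tau\circ H\le a$ on $W$) and no sign conventions fixed, it never does: the $1$-periodic orbits of $K_\tau$ with action below $\mu$ still comprise all the constants in $W$ together with cylindrical-end orbits of small period, and nothing in your setup forbids the fundamental class from dying among these, exactly as it does for the small Hamiltonians computing $SH$. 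Your stated reason for excluding a purely cylindrical primitive --- that "$\mu(W,\cL)$ itself realizes the shortest cylindrical-only primitive at action exactly $\mu(W,\cL)<\mu$" --- is circular: it presupposes control over where primitives can be supported, which is precisely what has to be proved, and it is not the mechanism of Irie's proof. There the profile is chosen so that the value of the deformed Hamiltonian on the maximum set $U$ of $H$ is of size comparable to $-\max H$ (equivalently, the generators carrying the fundamental class, or those needed to kill it, are shifted in the action filtration by roughly $\max H$), so that in a window truncated below $\max H$ the class cannot die; this action computation, together with the monotone-homotopy/cofinality bookkeeping needed to compare the filtered Floer complexes of the $K_\tau$ with the cofinal family defining $SH_\cdot^{(-\infty,\mu),0}(W;\cL)$ (your $K_\tau$ is not small on $W$, so the filtered continuation maps in the needed direction are not automatic), is the actual content of \cite[\S 3]{Irie-HZ} and is missing from your argument. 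As written, the final "hence the primitive must include a non-constant orbit in $W$" does not follow.
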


Therefore, we obtain the following conclusion.

\begin{corollary}\label{cor:finite_HZ_cap}
If $\mathscr{S}_0(W) \neq \emptyset$ both Hofer-Zehnder capacities
$c_{HZ}(W)$ and $c_{HZ}^0(W)$ are finite and the almost existence theorem holds in $W$. \hfill{$\square$}
\end{corollary}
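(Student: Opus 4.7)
The proof is a straightforward combination of the ingredients already assembled, so the plan is essentially bookkeeping: chain together Theorem~\ref{thm:Irie}, the definition of $\mu(W)$, and the remark following Theorem~\ref{thm:uniruled}, and then invoke the almost existence theorem.

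The first thing to establish is that $\mu(W)<\infty$ whenever $\mathscr{S}_0(W)\neq\emptyset$. Pick any $\cL\in\mathscr{S}_0(W)$. By the definition of $\mathscr{S}_0$ we have $SH_\cdot^0(W;\cL)=0$, while by construction $\cL|_W$ is trivial so $H_{2n}(W,\p W;\cL|_W)=H_{2n}(W,\p W)\neq 0$ because $W$ is connected. Applying the remark following the proof of Theorem~\ref{thm:uniruled} (which is purely formal and uses only these two facts), $\mu(W,\cL)$ is finite and is realized as the period of a closed contractible Reeb orbit on $\p W$. Taking the infimum over $\cL\in\mathscr{S}_0(W)$ gives $\mu(W)\le\mu(W,\cL)<\infty$.

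Combining this with Theorem~\ref{thm:Irie} and the obvious inequality $c_{HZ}(W)\le c_{HZ}^0(W)$ yields
\[
c_{HZ}(W)\le c_{HZ}^0(W)\le \mu(W)<\infty.
\]
Finally, the almost existence conclusion follows by applying Theorem~\ref{thm:almost_existence} to any Hamiltonian $H\colon W\to\R$ with $0$ a regular value and $H|_{\p W}>0$: the finiteness of $c_{HZ}^0(W)$ gives contractible periodic orbits on almost every regular level set in a given interval of regular values containing $0$, and the finiteness of $c_{HZ}(W)$ likewise gives (not necessarily contractible) periodic orbits on almost every such level set.

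There is no genuine obstacle in this proof. The nontrivial content has already been packaged into Theorem~\ref{thm:Irie} (the translation of vanishing of twisted symplectic homology into a Hofer--Zehnder bound, following Irie) and into the identification of $\mu(W,\cL)$ with a Reeb period (which rests on the standard fact that $SH_\cdot^{(-\infty,\mu),0}(W;\cL)$ only jumps across periods of closed Reeb orbits on $\p W$). The only mild point worth underlining is that all of this goes through with $\C$-coefficients twisted by $\cL\in\mathscr{S}$, because the local systems in $\mathscr{S}$ are by hypothesis trivial on the subspace of constant loops, so $H_\cdot(W,\p W)$ does not itself need to be twisted and the truncation/continuation maps are unaffected.
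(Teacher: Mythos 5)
Your proof is correct and follows exactly the chain the paper intends (and leaves implicit): finiteness of $\mu(W,\cL)$ for any $\cL\in\mathscr{S}_0(W)$ via the vanishing of the direct limit $SH_\cdot^0(W;\cL)$, hence $\mu(W)<\infty$, then $c_{HZ}(W)\le c_{HZ}^0(W)\le\mu(W)$ by Theorem~\ref{thm:Irie}, and finally Theorem~\ref{thm:almost_existence}. No gaps; this is the same argument the paper packages into the remark following Theorem~\ref{thm:uniruled} together with Irie's inequality.
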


This of course proves the theorem announced in the Introduction. 

\noindent {\bf Remark.} While $c_{HZ}^0$ need not necessarily be finite for all cotangent disc bundles as shown by the example of the torus discussed in the Introduction, we do not know examples of Liouville domains for which $c_{HZ}$ is infinite. 

%
%We point out that $C_{HZ}^0$ is not necessarily finite for every Liouville domain $W$. An example is the unit disk bundle over a torus $T^n$ in the cotangent bundle $T^* T^n$. The Reeb flow on the boundary of this Liouville domain corresponds to the geodesic flow on $T^n$. Because for the flat torus there are no contractible closed geodesics, the Hofer-Zehnder capacity $c_{HZ}^0$ has to be $\infty$ in this case. We do not know examples of Liouville domains for which $c_{HZ}$ is infinite. 
%

\section{Further directions} \label{sec:open_questions}

We discuss briefly possible further extensions of the ideas presented in this article. 

\begin{enumerate}
\item Which Weinstein domains admit local systems such that the corresponding symplectic homology vanishes? Ritter proved in~\cite{Ritter-ALE} that this is the case for ALE spaces.
%One possible approach goes through the characterization of symplectic homology as Hochschild homology proposed by Seidel, using the generation criterion for the wrapped Fukaya category by Abouzaid and the existence of Lefschetz fibration structure on Weinstein domains by Giroux and Pardon. 
\item Discuss local systems on more general path spaces, e.g. fix a submanifold $L\subset W$ and consider the space of paths in $W$ starting and ending on $L$. Using a reasoning similar to the one of the current paper this should give rise to finiteness results for the symmetric Hofer-Zehnder capacity as introduced by Liu-Wang~\cite{Liu_Wang}. Following a general pattern which first appeared in~\cite{Ritter}, this should also lead to existence results for Reeb chords between Legendrian submanifolds, i.e.~applications to Arnold's chord conjecture.
\item Derive obstructions to exact Lagrangian embeddings $L\hookrightarrow W$ into a Liouville domain $W$ in terms of the map $\pi_2(L)\to\pi_2(W)$ using Viterbo functoriality. Such obstructions were first obtained by Ritter in~\cite{Ritter09}.
%\item Extend the discussion of the Pontryagin product with local coefficients to the Chas-Sullivan product, see also~\cite{Abouzaid-cotangent}. 
\end{enumerate}

\bibliographystyle{plain}
%\bibliography{../Alex-Peter-Urs/Loc_sys}
\bibliography{Loc_sys}

\end{document}